\documentclass[12pt]{elsarticle}
\usepackage[utf8]{inputenc}
\usepackage{amsmath}
\usepackage{amssymb}
\usepackage{newtxtext}
\usepackage{newtxmath}
\usepackage{microtype}
\usepackage{xcolor}
\usepackage{graphicx}
\usepackage{enumerate}
\usepackage{bbold}
\usepackage{xfrac}
\makeatletter
\def\ps@pprintTitle{%
 \let\@oddhead\@empty
 \let\@evenhead\@empty
 \def\@oddfoot{\centerline{\thepage}}%
 \let\@evenfoot\@oddfoot}
\makeatother

\newtheorem{thm}{Theorem}[section]

\newtheorem{lemma}{Lemma}[section]

\newtheorem{prop}{Proposition}[section]

\newtheorem{conjecture}{Conjecture}[section]

\usepackage{amssymb}
\begin{document}
\begin{frontmatter}

\title{Projection Constants and Banach-Mazur Distance in Codimension Two}

 \author[label1]{Beata~Deregowska}
 
 \author[label4]{Barbara~Lewandowska \footnote{B.L. is partially supported by National Science Center (NCN) grant no. 2025/09/X/ST1/01604 }}

 \address[label1]{Institute of Mathematics\\
University of the National Education Commission, Podchorazych~2, Krakow, 30-084, Poland}
\address[label4]{Faculty of Mathematics and Computer Science\\
Jagiellonian University, Lojasiewicza~6, Krakow, 30-348, Poland}

%% \author{llll}

%% \affiliation{organization={},%Department and Organization
%%             addressline={}, 
%%             city={},
%%             postcode={}, 
%%             state={},
%%             country={}}

%\date{June 2022}
\begin{abstract}We prove that every subspace $V\subset\ell_\infty^N$
of codimension one or two with absolute projection constant
$\lambda(V)<4/3$ satisfies
$$
d(V,\ell_\infty^{\dim V})
\leq\frac{\lambda(V)}{4-3\lambda(V)}.
$$
We conjecture that the codimension assumption can be removed and that
the same estimate holds for every subspace $V\subset\ell_\infty^N$ with
$\lambda(V)<4/3$.

\end{abstract}
\begin{keyword}
Projection constants \sep
Banach--Mazur distance \sep
$P_\lambda$ problem \sep
Finite-dimensional Banach spaces \sep
\MSC[2020] 46B20 \sep 46B07 \sep 41A65
\end{keyword}

%% keywords here, in the form: keyword \sep keyword

%% PACS codes here, in the form: \PACS code \sep code

%% MSC codes here, in the form: \MSC code \sep code
%% or \MSC[2008] code \sep code (2000 is the default)

\end{frontmatter}
\section{Introduction}
Throughout the paper, all Banach spaces are real. Let $V$ be a nonzero
finite-dimensional subspace of a Banach space $X$. A bounded linear map
$P:X\to V$ is a projection onto $V$ if $P|_V=\mathrm{id}_V$.
The relative projection constant is
$$
\lambda(V,X)=\inf\{\|P\|:P:X\to V\text{ is a projection}\}.
$$
The absolute projection constant of $V$ is defined by
$$
\lambda(V)=\sup\{\lambda(I(V),X): I:V\to X
\text{ is an isometric embedding}\},
$$
where the supremum is taken over all Banach spaces $X$ and all such
embeddings. It is known that
$\lambda(V)=\lambda(I(V),\ell_\infty)$ for any isometric copy $I(V)$
of $V$ in $\ell_\infty$ (see e.g. \cite{Wojtaszczyk}).
In particular, if $V\subset\ell_\infty^N$, then
\begin{equation}\label{absrel}
\lambda(V)=\lambda(V,\ell_\infty^N).
\end{equation}

For two finite-dimensional Banach spaces $V$ and $W$ of the same
dimension, their Banach-Mazur distance is
$$
d(V,W)=\inf\{\|T\|\,\|T^{-1}\|:T:V\to W\text{ is an isomorphism}\}.
$$
If $\dim V=m$, the injectivity of $\ell_\infty^m$ gives
\begin{equation}\label{lambdaBM}
\lambda(V)\leq d(V,\ell_\infty^m).
\end{equation}
Indeed, every isomorphism $T:V\to\ell_\infty^m$ extends to any Banach
superspace of $V$ without increasing its norm; composing such an
extension with $T^{-1}$ gives a projection onto $V$. The classical
characterization of finite-dimensional injective spaces shows that
$\lambda(V)=1$ if and only if $d(V,\ell_\infty^m)=1$; see
\cite{Nachbin,Goodner,Kelley}. For early results on projections in
classical Banach spaces, see also \cite{Pelczynski1960}.

It is natural to ask whether \eqref{lambdaBM} admits a converse estimate
independent of the dimension. More precisely, does there exist a
nondecreasing function $\Phi:[1,\infty)\to[1,\infty)$, with $\Phi(1)=1$,
such that
\begin{equation}\label{Plambda}
d(V,\ell_\infty^m)\leq\Phi(\lambda(V))
\end{equation}
for every $m$-dimensional Banach space $V$? This question is discussed
by Zippin in \cite{ZippinRange} and is closely related to the classical
$P_\lambda$-problem; see \cite{LiPe}. Here a $P_\lambda$-space is a
Banach space whose every isometric copy in a Banach superspace is the
range of a projection of norm at most $\lambda$. For finite-dimensional
$V$, this is equivalent to $\lambda(V)\leq\lambda$.

A global linear bound in \eqref{Plambda} is impossible. Szarek
\cite{Szarek} constructed, for arbitrarily large $m$, $m$-dimensional
spaces $V$ such that
$$
d(V,\ell_\infty^m)\geq c\sqrt m\log m
$$
with an absolute constant $c>0$. Together with the Kadets-Snobar bound
$\lambda(V)\leq\sqrt m$ \cite{KadetsSnobar}, this rules out an estimate
$d(V,\ell_\infty^m)\leq C\lambda(V)$ with an absolute constant $C$.
On the other hand, Zippin proved that projection constants sufficiently
close to one force the distance to the corresponding $\ell_\infty$-space
to be close to one, independently of the dimension
\cite{ZippinPlambda,ZippinCorrection}. More precisely, there exist
$\varepsilon_0>0$ and a function
$\varphi:(0,\varepsilon_0)\to[0,\infty)$, with
$\varphi(\varepsilon)\to0$ as $\varepsilon\to0$, such that
$$
d(V,\ell_\infty^{\dim V})\leq1+\varphi(\varepsilon)
$$
for every $0<\varepsilon<\varepsilon_0$ and every finite-dimensional
space $V$ with $\lambda(V)\leq1+\varepsilon$.
Related results concerning large subspaces of ranges of small-norm
projections and finite-dimensional $P_\lambda$-spaces were obtained in
\cite{ZippinRange,Bourgain}. Further estimates under structural
assumptions appear in \cite{Schutt,Sass}; for bounds based on
factorization methods, see \cite{BourgainSzarek}.
The aim of this paper is to obtain an explicit estimate for the
Banach-Mazur distance to an $\ell_\infty$-space in terms of the
projection constant. We prove that every nonzero subspace
$V\subset\ell_\infty^N$ of codimension one or two with
$\lambda(V)<4/3$ satisfies
\begin{equation}\label{introBM}
d(V,\ell_\infty^{\dim V})
\leq
\frac{\lambda(V)}{4-3\lambda(V)}.
\end{equation}
Our main result, Theorem~\ref{Bazur}, establishes this estimate
for subspaces of codimension two. The same argument, combined
with Proposition~\ref{hypergeom}, yields the hyperplane case.
Equivalently, writing $\lambda(V)=1+\varepsilon$, we obtain
$$
d(V,\ell_\infty^{\dim V})
\leq
\frac{1+\varepsilon}{1-3\varepsilon},
\qquad
0\leq\varepsilon<\frac13.
$$

For comparison, a quantitative form of Zippin's general result,
stated in \cite[p.~809]{GKL} with reference to Zippin and Bernau,
provides an estimate for arbitrary finite-dimensional spaces when
$0<\varepsilon<17^{-8}$.
For hyperplanes and codimension-two subspaces, our estimate covers
the much larger interval $0\leq\varepsilon<1/3$.
Moreover, its right-hand side is
$1+4\varepsilon+O(\varepsilon^2)$ as $\varepsilon\to0$.

The proof in codimension two combines an elementary rank estimate
with a geometric property of two-dimensional subspaces of
$\ell_\infty^4$. Together, these allow us to construct a basis of
$V$ from signed sums of disjoint groups of columns of a small-norm
projection and to obtain \eqref{introBM}.
In the final section, we show that this geometric property fails
in higher codimension. For the counterexamples constructed there,
however, the required condition on groups of columns still holds
Thus these examples show a limitation of the geometric argument,
rather than a failure of the Banach-Mazur estimate,
and we expect the latter
to hold in general.

\begin{conjecture}\label{coniektore}
Every nonzero subspace $V\subset\ell_\infty^N$ with $\lambda(V)<\frac43$
satisfies
$$
d(V,\ell_\infty^{\dim V})\leq\frac{\lambda(V)}{4-3\lambda(V)}.
$$
\end{conjecture}
Since every finite-dimensional Banach space is a limit, in the Banach--Mazur
distance, of subspaces of the spaces $\ell_\infty^N$, and since
$\lambda(W)\leq d(V,W)\,\lambda(V)$ for all $V,W$ of the same dimension,
Conjecture~\ref{coniektore} is equivalent to the same statement for all
finite-dimensional Banach spaces.\\
~\\
{\bf Notation.}
For integers $a,b$, we write $[a:b]=\{a,a+1,\ldots,b\}$ when
$a\leq b$, and set $[a:b]=\varnothing$ otherwise. The standard canonical vectors are denoted by $e_1,\ldots,e_N$;
we use the same notation for the corresponding coordinate functionals.
For a matrix $A=(a_{ij})$, we denote by $A_{I,J}$ the submatrix
formed by the rows indexed by $I$ and the columns indexed by $J$.
The $j$-th column of a matrix $P=(p_{ij})$ is denoted by $p_j$.
We use $\|\cdot\|_1$ and $\|\cdot\|_\infty$ for the usual
$\ell_1$ and $\ell_\infty$ norms, and $\|\cdot\|$ for operator norms.\\
For a subspace $V\subset\mathbb R^N$, we denote its orthogonal
complement with respect to the standard Euclidean inner product
by $V^\perp$.
Under the identification $(\ell_\infty^N)^*=\ell_1^N$,
this is also the annihilator of $V$.\\
Finally, for linearly independent $f,g\in\ell_1^4$, we set
$$
\Delta_i=f_i g-g_i f,
\qquad
\Delta_{ij}=f_i g_j-g_i f_j=(\Delta_i)_j,
\qquad i,j\in[1:4].
$$
Thus $\Delta_{ji}=-\Delta_{ij}$, $\Delta_{ii}=0$, and
$\|\Delta_i\|_1=\sum_{j=1}^4|\Delta_{ij}|$.

\section{Main result}
We first recall the classical L\'evy-Desplanques theorem.
We then give a proof of an elementary rank estimate under a weaker
diagonal dominance assumption. This estimate may well be folklore,
but we have not found it stated explicitly in standard textbooks
on linear algebra or matrix analysis. We therefore include a proof.
\begin{thm}[L\'evy-Desplanques, see e.g.\ {\cite{HJ}}]\label{domina}
    Let $A=(a_{ij})\in\mathbb{K
    }^{n\times n}$, where $\mathbb{K}=\mathbb{R}$ or $\mathbb{K}=\mathbb{C}.$ If $A$ is strictly diagonally dominant, that is,
$$ 
|a_{ii}|>\sum_{j\ne i}|a_{ij}|, \qquad i\in [1:n] 
$$
then $A$ is nonsingular.
\end{thm}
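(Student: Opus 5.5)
We argue by contradiction and show directly that a strictly diagonally dominant matrix has trivial kernel. Suppose $A$ is singular. Then there is a nonzero vector $x=(x_1,\ldots,x_n)^T\in\mathbb K^n$ with $Ax=0$. Choose an index $i\in[1:n]$ at which $|x_i|$ attains its maximum, so that $|x_i|=\|x\|_\infty>0$. We will obtain a contradiction from the $i$-th equation of $Ax=0$.

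That equation reads $\sum_{j}a_{ij}x_j=0$, which we rewrite as
$$
a_{ii}x_i=-\sum_{j\ne i}a_{ij}x_j .
$$
Taking absolute values and using the triangle inequality together with $|x_j|\le|x_i|$ gives
$$
|a_{ii}|\,|x_i|\le\sum_{j\ne i}|a_{ij}|\,|x_j|\le\Bigl(\sum_{j\ne i}|a_{ij}|\Bigr)|x_i| .
$$
Dividing by $|x_i|>0$ yields $|a_{ii}|\le\sum_{j\ne i}|a_{ij}|$. This contradicts strict diagonal dominance in row $i$. Hence $\ker A=\{0\}$, and since $A$ is square, it is nonsingular.

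There is no genuine obstacle. The only point needing care is choosing the row $i$ at a maximal coordinate of the kernel vector, since this is what allows every off-diagonal term to be bounded by $|x_i|$. The argument uses only the absolute value and the triangle inequality, so it applies verbatim for $\mathbb K=\mathbb R$ and for $\mathbb K=\mathbb C$.

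The same maximal-coordinate device is what we would adapt for the weaker-dominance rank estimate announced before the theorem. There one would restrict attention to the rows where the inequality is strict and bound the dimension of the kernel accordingly.
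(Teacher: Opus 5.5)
Your proof is correct and complete. It is the standard maximal-coordinate argument, namely the proof of Gershgorin's disc theorem specialised to the eigenvalue $0$, which is the usual textbook route behind the citation; the paper itself gives no proof and simply quotes the result. Your closing remark is not needed for this statement, and it understates what Lemma~\ref{rankus} requires. A kernel vector with a maximal coordinate only yields $|a_{ii}|\le\sum_{j\ne i}|a_{ij}|$, which does not contradict $2|a_{ii}|>\sum_{j\ne i}|a_{ij}|$. The paper therefore first chooses an $r\times r$ minor of maximal modulus to bound the expansion coefficients by $1$, deduces that $A_{J',J'}$ is strictly diagonally dominant, and only then applies Theorem~\ref{domina}.
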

\begin{lemma}\label{rankus}
Let $A=(a_{ij})\in \mathbb{K}^{N\times N},$ where $\mathbb{K}=\mathbb{R}$ or $\mathbb{K}=\mathbb{C}.$ If 
$$
2|a_{ii}|>\sum_{j\ne i}|a_{ij}|, \qquad i\in [1:N] 
$$
then ${\rm rank}(A)\geq \tfrac{N}{2}.$
\end{lemma}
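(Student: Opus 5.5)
The plan is to show that the kernel of $A$ cannot be large, by finding a principal submatrix of $A$ indexed by as many rows as $\dim\ker A$ that is strictly diagonally dominant. Theorem~\ref{domina} then makes that submatrix nonsingular, which bounds ${\rm rank}(A)$ from below.

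First note that the hypothesis forces $a_{ii}\neq 0$ for every $i$: if $a_{ii}=0$, then $0>\sum_{j\neq i}|a_{ij}|\ge 0$, which is impossible. Let $k=\dim\ker A=N-{\rm rank}(A)$ and assume $k\ge 1$; otherwise the claim is trivial. Choose a matrix $X\in\mathbb K^{N\times k}$ whose columns form a basis of $\ker A$. Among all $k$-element row sets $R\subset[1:N]$, pick one that maximizes $|\det X_{R,[1:k]}|$. This maximum is nonzero because $X$ has rank $k$.

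Set $Y=X\,(X_{R,[1:k]})^{-1}$, and index the columns of $Y$ by the elements of $R$. Then $Y_{R,R}=I$, the columns of $Y$ still span $\ker A$, and by Cramer's rule every entry satisfies $|y_{jl}|\le 1$. Indeed, $y_{jl}$ is a ratio of the determinant of $X$ restricted to $R$ with row $l$ replaced by row $j$, over the maximal determinant. This Auerbach-type normalization is the key construction.

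Fix $i\in R$ and use the column $Y e_i\in\ker A$. Its entries are $1$ at position $i$, $0$ at the other positions of $R$, and at most $1$ in modulus outside $R$. Reading off row $i$ of $AYe_i=0$ gives
$$
a_{ii}=-\sum_{j\notin R}a_{ij}\,y_{ji},\qquad\text{hence}\qquad |a_{ii}|\le\sum_{j\notin R}|a_{ij}|.
$$
Combining this with the hypothesis $2|a_{ii}|>\sum_{j\in R\setminus\{i\}}|a_{ij}|+\sum_{j\notin R}|a_{ij}|$, we obtain
$$
\sum_{j\in R\setminus\{i\}}|a_{ij}|<2|a_{ii}|-\sum_{j\notin R}|a_{ij}|\le |a_{ii}|,\qquad i\in R.
$$
So $A_{R,R}$ is strictly diagonally dominant, and by Theorem~\ref{domina} it is nonsingular. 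Therefore ${\rm rank}(A)\ge {\rm rank}(A_{R,R})=k=N-{\rm rank}(A)$, which gives ${\rm rank}(A)\ge N/2$.

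The main obstacle is the choice of the row set $R$: one needs kernel vectors that are simultaneously ``unit at $i$, zero on $R\setminus\{i\}$, and bounded by $1$ elsewhere''. The maximal-determinant normalization is what delivers all three properties at once, and it works verbatim over both $\mathbb R$ and $\mathbb C$. The remaining steps are routine bookkeeping with the row sums.
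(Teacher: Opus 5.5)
Your proof is correct and is essentially the paper's argument in dual form. You apply the maximal-determinant selection to a basis matrix of $\ker A$; the paper instead applies it to an $r\times r$ minor $A_{I,J}$ of $A$ itself and expands each column $a_k$, $k\in J'=[1:N]\setminus J$, in the basic columns with coefficients $|c_j^k|\le 1$. This amounts to kernel vectors $e_k-\sum_{j\in J}c_j^k e_j$ of exactly the shape of your normalized columns of $Y$, with $R=J'$. From there the two proofs coincide: reading off row $k$ gives $|a_{kk}|\le\sum_{j\in J}|a_{kj}|$, so the principal submatrix $A_{J',J'}$ is strictly diagonally dominant, and Theorem~\ref{domina} yields $N-\mathrm{rank}(A)\le\mathrm{rank}(A)$.
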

{\sc Proof.} Suppose, to the contrary, that $r:=\mathrm{rank}(A)<\tfrac{N}{2}$. The assumption implies $a_{ii}\neq 0$ for all $i$, so $A\neq 0$ and $r\geq 1$. Among all $r\times r$ submatrices of $A$, choose one, say $A_{I,J}$, with maximal $|\det A_{I,J}|$. Since $\mathrm{rank}(A)=r$, we have $\det A_{I,J}\neq 0$.  In particular, the columns $a_j$, $j\in J$, of $A$  form a basis of the column space of $A$. Hence, for every $k\in J':=[1:N]\setminus J$, there exist scalars $c_j^k\in \mathbb{K}$, $j\in J$, such that
\begin{equation}\label{kombin}
a_k=\sum_{j\in J}c_j^ka_j .
\end{equation}
We claim that $|c_j^k|\leq 1$ for all $j\in J$ and $k\in J'$. Indeed, fix such $j$ and $k$ and put $J_j^k := (J \cup \{k\}) \setminus \{j\}$.  By \eqref{kombin} and multilinearity of the determinant, $|\det A_{I,J_j^k}| = |c_j^k| \cdot |\det A_{I,J}|$. The maximality of $|\det A_{I,J}|$ implies
$$
|\det A_{I,J}|\geq |\det A_{I,J_j^k}|=|c_j^k||\det A_{I,J}|.
$$
Since $\det A_{I,J}\neq 0$, this gives $|c_j^k|\leq 1$, as required. Now let $k \in J'$. By \eqref{kombin}  and the above observation, we get
$$
|a_{kk}|\leq \sum_{j\in J}|c_j^k||a_{kj}|\leq \sum_{j\in J}|a_{kj}|.
$$
Since $k \notin J$, this and the lemma's assumption imply
$$
\sum_{\substack{j\in J'\\ j\neq k}}|a_{kj}|=\sum_{j\ne k}|a_{kj}|-\sum_{j\in J}|a_{kj}|<2|a_{kk}|-|a_{kk}|=|a_{kk}|.
$$
Thus the submatrix $A_{J',J'}$ is strictly diagonally dominant and hence, by Theorem~\ref{domina}, nonsingular. It follows that
$$
N-r=|J'|\leq \mathrm{rank}(A)=r.
$$
Hence $\tfrac{N}{2}\leq r$,  a contradiction.\\
~\\
The previous rank estimate can now be applied to the complementary projection. As a consequence, a small-norm projection may have only a limited number of diagonal entries that are not close to one.

\begin{lemma}\label{conajmniej}
Let $V\subset \ell_\infty^N$ be a subspace of dimension $N-m$, and let
$P=(p_{ij})_{i,j=1}^N$ be a projection onto $V$ satisfying
$\|P\|\leq 1+\varepsilon,$ for some $\varepsilon\geq0.$ Then
$|p_{ii}|\geq 1-\varepsilon$ for at least $N-2m$ indices $i\in[1:N]$.
\end{lemma}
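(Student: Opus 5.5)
The plan is to apply Lemma~\ref{rankus} to a suitable principal submatrix of the complementary projection $Q=I-P$. Since $P$ is a projection onto $V$ with $\dim V=N-m$, the matrix $Q=(q_{ij})$ is a projection with $\operatorname{rank}(Q)=m$. Its entries are $q_{ii}=1-p_{ii}$ and $q_{ij}=-p_{ij}$ for $i\neq j$. The operator norm of $P$ on $\ell_\infty^N$ is the maximal $\ell_1$-norm of a row, so for every $i$
$$
|p_{ii}|+\sum_{j\neq i}|p_{ij}|\leq 1+\varepsilon .
$$

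Let $S$ be the set of indices $i$ with $|p_{ii}|<1-\varepsilon$. We want to show $|S|\leq 2m$, which gives the claim. For $i\in S$, the row bound yields
$$
\sum_{j\neq i}|q_{ij}|=\sum_{j\neq i}|p_{ij}|\leq 1+\varepsilon-|p_{ii}| .
$$
We compare this with $2|q_{ii}|=2|1-p_{ii}|\geq 2(1-|p_{ii}|)$, and it suffices to check
$$
1+\varepsilon-|p_{ii}|<2-2|p_{ii}|,
$$
that is, $|p_{ii}|<1-\varepsilon$, which is exactly the defining property of $S$. Hence for every $i\in S$ we have
$$
2|q_{ii}|>\sum_{j\neq i}|q_{ij}|\geq\sum_{j\in S,\,j\neq i}|q_{ij}| .
$$

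So the principal submatrix $Q_{S,S}$ satisfies the hypothesis of Lemma~\ref{rankus} with $N$ replaced by $|S|$. Therefore
$$
\operatorname{rank}(Q_{S,S})\geq \frac{|S|}{2}.
$$
Since a submatrix has rank at most the rank of the whole matrix, $\operatorname{rank}(Q_{S,S})\leq\operatorname{rank}(Q)=m$. Combining the two gives $|S|\leq 2m$. So at least $N-2m$ indices satisfy $|p_{ii}|\geq 1-\varepsilon$.

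I do not expect a real obstacle here. The one point needing care is the row-sum comparison: it should pass through $|1-p_{ii}|\geq 1-|p_{ii}|$, so that no assumption on the sign of $p_{ii}$ is needed. The strict inequality comes from the strict definition of $S$, which is what Lemma~\ref{rankus} requires. If $S$ is empty the claim is trivial.
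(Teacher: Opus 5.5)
Your proposal is correct and follows essentially the same route as the paper. You restrict $Q=\mathrm{Id}-P$ to the indices where $|p_{ii}|<1-\varepsilon$, verify the weakened diagonal dominance from the row-sum bound for $\|P\|$, and apply Lemma~\ref{rankus} together with $\operatorname{rank}(Q)=m$ to get $|S|\leq 2m$. The only difference from the paper is cosmetic: it phrases the comparison as $\sum_{j\neq i}|q_{ij}|\leq\varepsilon+|q_{ii}|<2|q_{ii}|$ rather than through $|p_{ii}|$.
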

{\sc Proof.}
Let $Q={\rm Id}-P.$ Since $P$ is a projection onto a subspace of dimension $N-m$, the
complementary projection $Q$ has rank $m$. Denote by
$ I:=\{i\in[1:N]: |p_{ii}|<1-\varepsilon\}$
the set of indices where the desired inequality does not hold. Let
$k=|I|$. We claim that $k\leq 2m$. Indeed, for every $i\in I$,
$ |q_{ii}| = |1-p_{ii}| \geq 1-|p_{ii}|> \varepsilon.$  Furthermore, from $\|P\|\leq 1+\varepsilon$ we get $ |1-q_{ii}|+\sum_{j\neq i}|q_{ij}|
\leq 1+\varepsilon. $ Combining this with the fact that
$ |1-q_{ii}|\geq 1-|q_{ii}|$ implies
$$
\sum_{j\neq i}|q_{ij}|
\leq
\varepsilon+|q_{ii}|
<2|q_{ii}|.
$$
Now let
$\widetilde{Q}=(q_{ij})_{i,j\in I}.$
 By  the latter and Lemma~\ref{rankus} 
 $$
\frac{k}{2}\leq {\rm rank}(\widetilde{Q})\leq {\rm rank(Q)}= m.
$$
Thus, $k \leq 2m$, as claimed.\\
~\\
In the codimension-two case, the previous estimate leaves at most four coordinates uncontrolled.  This leads us to the geometry of two-dimensional subspaces of $\ell_\infty^4$.
\begin{lemma}\label{lem1}
Let $f,g\in \ell_1^4$ be linearly independent and set
$ V=\ker f\cap\ker g. $
Suppose that there exists $k\in[1:4]$ such that
$ 3|\Delta_{lk}|<\|\Delta_l\|_1  $  for all $l\neq k,$
then there exists $v\in V$ such that $|v_k|=1$ and $|v_i|\leq\frac12$
for all $i\neq k.$
\end{lemma}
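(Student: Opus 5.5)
The plan is to dualize: the existence of the required $v$ should be equivalent to an inequality on the annihilator $V^\perp=\mathrm{span}\{f,g\}$, and that inequality only needs to be checked at finitely many extreme points, which are exactly the vectors $\Delta_l$.

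\emph{Step 1 (the easy direction and its converse).} Fix $k$. For any $v\in V$ with $v_k=1$ and any $h\in V^\perp$ we have $0=\langle h,v\rangle=h_k+\sum_{i\neq k}h_iv_i$. Hence
$$|h_k|\leq \max_{i\neq k}|v_i|\sum_{i\ne k}|h_i|.$$
So a vector $v$ with $\max_{i\ne k}|v_i|\le \frac12$ can only exist if $3|h_k|\le\|h\|_1$ for all $h\in V^\perp$. We need the converse. Consider the quantity
$$\mu=\inf\Bigl\{\max_{i\ne k}|v_i| : v\in V,\ v_k=1\Bigr\}.$$
This is the $\ell_\infty$-distance (on the coordinates $i\ne k$) from $e_k$ to the subspace $V\cap\ker e_k$, measured along the affine set $\{v\in V: v_k=1\}$. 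By the finite-dimensional Hahn--Banach/LP duality,
$$\mu=\sup\Bigl\{\frac{|h_k|}{\sum_{i\ne k}|h_i|} : h\in V^\perp\Bigr\},$$
and the infimum is attained by compactness. The degenerate case where every $h\in V^\perp$ has $h_k=0$ is handled directly: then $e_k\in V$ and $v=e_k$ works. Otherwise the affine set is nonempty, since $e_k\notin V^{\perp\perp}$ fails only in that degenerate case. Writing out this duality cleanly, e.g.\ as the statement that the functional $v\mapsto v_k$ on $V$ has norm equal to $\mu^{-1}$ with respect to the seminorm $\max_{i\ne k}|v_i|$, extended via Hahn--Banach to $\mathbb R^4$ and then identified with an element $e_k-h$, $h\in V^\perp$, is the one place requiring care.

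\emph{Step 2 (reduction to vertices).} It remains to show that $3|h_k|\le\|h\|_1$ for all $h\in V^\perp$, equivalently $|h_k|\le\frac12\sum_{i\ne k}|h_i|$. By homogeneity it suffices to check this on the unit sphere of $\ell_1^4$ intersected with the plane $V^\perp$. The unit ball $B=\{h\in V^\perp:\|h\|_1\le1\}$ is a centrally symmetric polygon, and the convex function $h\mapsto 3|h_k|-\|h\|_1$ attains its maximum over the boundary of $B$ at a vertex. Here one can use that $3|h_k|$ is convex and $\|h\|_1=1$ on the boundary, so it is enough to maximize $|h_k|$ over $B$. Vertices of $B$ are points where the linear pieces of $\|\cdot\|_1$ change, i.e.\ where some coordinate $h_l$ vanishes. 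In the plane $\mathrm{span}\{f,g\}$, the vectors with $h_l=0$ are precisely the multiples of $\Delta_l=f_lg-g_lf$, which is nonzero unless $f_l=g_l=0$; in that case the coordinate $l$ contributes no break.

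\emph{Step 3 (checking vertices).} At a vertex proportional to $\Delta_k$ we have $h_k=(\Delta_k)_k=0$, so the inequality is trivial. At a vertex proportional to $\Delta_l$ with $l\neq k$, we have $h_k=\Delta_{lk}$, and the hypothesis $3|\Delta_{lk}|<\|\Delta_l\|_1$ gives exactly what is needed, even strictly. Hence $\mu\le\frac12$, and the attained minimizer $v$ satisfies $v_k=1$ and $|v_i|\le\frac12$ for $i\ne k$.

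I expect the main obstacle to be Step 1, namely stating the duality precisely and handling attainment and the degenerate case. Step 2 is routine convexity, and Step 3 is immediate from the hypothesis.
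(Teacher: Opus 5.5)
Your proposal is correct and is essentially the paper's argument seen from the annihilator side: the paper separates $\widetilde{e_k}$ from $\mathrm{conv}\{\pm2\widetilde{e_i}: i\neq k\}$ in $V^*$ and, via Carath\'eodory, turns a boundary point of that polygon into a vector of $V^\perp$ with one vanishing coordinate, i.e.\ a multiple of $\Delta_l$, which is exactly one of your vertices of the $\ell_1$-ball of $V^\perp$. Two small repairs are needed: first, the set $\{v\in V: v_k=1\}$ is nonempty iff $e_k\notin V^\perp$ (not iff $e_k\notin V$), and this follows from your Step~3 applied to $h=e_k$ (or from LP strong duality once the dual is bounded); second, $3|h_k|-\|h\|_1$ is not convex, although your reformulation as maximizing the convex function $|h_k|$ over $B$ is fine.
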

{\sc Proof.} 
By permuting the coordinates, we may assume that $k=4$. We first establish that
$
{\rm span}\{\Delta_1,\Delta_2,\Delta_3\}=V^\perp.
$
Indeed, each $\Delta_l$ belongs to
$
V^\perp={\rm span}\{f,g\},
$
and the inequalities
\begin{equation}\label{inq123}
3|\Delta_{l4}|<\|\Delta_l\|_1,
\qquad l=1,2,3,
\end{equation}
ensure that $\Delta_1,\Delta_2,\Delta_3$ are all nonzero. Assume that their span is a proper subspace of $V^\perp$. Since
$\dim V^\perp=2$, their span must then be one-dimensional. Thus
$\Delta_1$ is linearly dependent with both $\Delta_2$ and $\Delta_3$,
which yields
$\Delta_{12}=\Delta_{13}=0.$
As $\Delta_{11}=0$, it follows that
$\|\Delta_1\|_1=|\Delta_{14}|.$
which contradicts \eqref{inq123}. Moreover, for every $i\in[1:3]$, at least one of
$\Delta_{1i},\Delta_{2i},\Delta_{3i}$ is nonzero. Otherwise, since
$\Delta_1,\Delta_2,\Delta_3$ span $V^\perp$, we would have
$f_i=g_i=0$ for some $i$, and therefore $\Delta_i=0$, contradicting
\eqref{inq123}.
For $h\in \ell_1^4$ by $\widetilde{h}$ we denote the restriction of the functional to subspace $V.$ 
First, consider the case where $\widetilde{e_1},\widetilde{e_2},\widetilde{e_3}$ do not
span $V^*$. Since $\dim V^*=2$, their span has dimension one and consequently the intersection of kernels of these functionals has dimension one. Hence  $(0,0,0,1)\in V$ and is our desired vector.\\
Now assume that vectors $\widetilde{e_1},\widetilde{e_2},\widetilde{e_3}$ span whole space $V^{*}.$  First observe that $\widetilde{e_4}$ can not be equal to $0.$ Indeed if so,  then ${\rm span}\{\Delta_1,\Delta_2,\Delta_3\}={\rm span}\{e_4\}\subsetneq V^{\perp},$ a contradiction. We will show that $\widetilde{e_4}\notin {\rm conv}\{ \pm2\widetilde{e_1},\pm2\widetilde{e_2},\pm2\widetilde{e_3}\}=:C.$ For the contrary assume that $\widetilde{e_4}\in C.$ By the definition of set $C$ there exists $t\geq1$ such that $t\widetilde{e_4}\in\partial C.$ By Carath\'eodory theorem there exists $i,j\in[1:3]$ and $a,b\in\mathbb{R},$ such that $|a|+|b|=1$ and  $t\widetilde{e_4}=2a\widetilde{e_i}+2b\widetilde{e_j}.$ Then $te_4-2ae_i-2be_j\in V^\perp.$ Let $l\in [1:3]\setminus \{i,j\}.$ Then at least one $\Delta_{il} $ or $\Delta_{jl} $ has to be nonzero. Hence 
$$
te_4-2ae_i-2be_j=\frac{t\Delta_l}{\Delta_{l4}}
$$
and consequently 
$$
|a|+|b|=\frac{t(|\Delta_{li}|+|\Delta_{lj}|)}{2|\Delta_{l4}|}=\frac{t(\|\Delta_l\|_1-|\Delta_{l4}|)}{2|\Delta_{l4}|}>t\geq 1,
$$
a contradiction. Now, by the separation theorem there exist $v\in V=(V^*)^*$ such that 
$$
|v_4|=|\widetilde{e_4}(v)|>\max\{|h(v)|:\; h\in C\}\geq 2|\widetilde{e_l}(v)|=2|v_l|\;\; \textrm{ for all } l\in[1:3].
$$
Dividing $v$ by $|v_4|$ gives the desired vector.\\
~\\

\begin{lemma}\label{godorbed}
Let $f,g\in \ell_1^4$ be linearly independent and set
$ V=\ker f\cap\ker g. $
Suppose that there exist $i,j\in[1:4]$ such that $i\neq j$ and
$ 3|\Delta_{ij}|\leq \|\Delta_j\|_1  $ 
then there exists $v\in V$ such that $\|v\|_{\infty}=1$ and $|v_l|\leq\frac12$
for all $l\notin\{i,j\}.$
\end{lemma}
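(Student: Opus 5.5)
The plan is to work directly with the single functional $\Delta_j$ and to prescribe the two ``small'' coordinates of $v$ rather than searching for $v$ geometrically. Let $l,m$ denote the two indices in $[1:4]\setminus\{i,j\}$. Since $\Delta_j=f_jg-g_jf\in V^\perp$ and $\Delta_{jj}=0$, every $v\in V$ satisfies
$$
\Delta_{ji}v_i+\Delta_{jl}v_l+\Delta_{jm}v_m=0 .
$$
Because $|\Delta_{ij}|=|\Delta_{ji}|$, the hypothesis $3|\Delta_{ij}|\le\|\Delta_j\|_1=|\Delta_{ji}|+|\Delta_{jl}|+|\Delta_{jm}|$ is equivalent to $2|\Delta_{ji}|\le|\Delta_{jl}|+|\Delta_{jm}|$. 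This is the inequality I will exploit.

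Consider the linear map $T:V\to\mathbb R^2$, $T v=(v_l,v_m)$, with $\dim V=2$.

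\emph{Case 1: $T$ is not injective.} Then there is a nonzero $v\in V$ with $v_l=v_m=0$. Normalizing it to $\|v\|_\infty=1$ gives the required vector.

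\emph{Case 2: $T$ is injective.} Then $T$ is bijective, so for any prescribed values there is $v\in V$ with $v_l=\sigma/2$ and $v_m=\tau/2$, where $\sigma,\tau\in\{\pm1\}$. I choose $\sigma=\operatorname{sign}\Delta_{jl}$ and $\tau=\operatorname{sign}\Delta_{jm}$ (with any sign if the entry is zero), so that
$$
|\Delta_{jl}v_l+\Delta_{jm}v_m|=\tfrac12\bigl(|\Delta_{jl}|+|\Delta_{jm}|\bigr)\ge|\Delta_{ji}|.
$$

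\emph{Subcase $\Delta_{ji}\neq0$.} The displayed relation gives $|v_i|\ge1$, so $\|v\|_\infty\ge1$. Dividing $v$ by $\|v\|_\infty$ yields a vector of sup-norm one whose $l$- and $m$-coordinates have modulus at most $\tfrac12$.

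\emph{Subcase $\Delta_{ji}=0$.} The relation reads $\Delta_{jl}v_l+\Delta_{jm}v_m=0$ for all $v\in V$. Since $T$ is onto $\mathbb R^2$, this forces $\Delta_{jl}=\Delta_{jm}=0$, hence $\Delta_j=0$, i.e.\ $f_jg=g_jf$. Linear independence of $f,g$ then gives $f_j=g_j=0$, so $e_j\in V$, and $v=e_j$ works. (In fact this contradicts injectivity of $T$, since $Te_j=0$, but it is cleaner simply to exhibit $e_j$.)

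The only delicate point is this degenerate subcase $\Delta_{ij}=0$, where the hypothesis holds trivially and the sign argument gives no lower bound on $|v_i|$. As shown, it is dispatched by the independence of $f$ and $g$. Otherwise the argument is a direct sign choice, with no need for the separation or Carath\'eodory machinery used in Lemma~\ref{lem1}.
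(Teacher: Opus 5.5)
Your proof is correct and follows essentially the paper's route. Your Case~1 ($T$ not injective) is exactly the paper's case $\Delta_{ij}=0$, and in Case~2 your vector coincides, up to a scalar, with the paper's signed combination $\mathrm{sgn}(\Delta_{23})u+\mathrm{sgn}(\Delta_{24})w$ of the explicit vectors $u,w$ (for $i=1$, $j=2$); you obtain it from surjectivity of $T$ rather than writing it down, so your subcase $\Delta_{ji}=0$ is vacuous, as you already noted.
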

{\sc Proof.} By relabeling the coordinates, it is enough to assume that  $i=1$ and $j=2$. Suppose first that $\Delta_{12}=0$. Then the restrictions of $f$ and
$g$ to ${\rm span}\{e_1,e_2\}$ are linearly dependent, and hence there
exists a nonzero vector $v\in V$ with $v_3=v_4=0.$
After normalization $v$, we obtain $\|v\|_\infty=1$, and the conclusion
follows.\\
If $\Delta_{12}\neq 0,$ then $V=\ker \Delta_1\cap\ker \Delta_2,$ and by the definition of space V 
$$u:=(\Delta_{23},-\Delta_{13},\Delta_{12},0)\in V \;\;\;\textrm{ and } \;\;\;w:=(\Delta_{24},-\Delta_{14},0,\Delta_{12})\in V$$  
and consequently 
$$
v:=\frac{1}{|\Delta_{23}|+|\Delta_{24}|}({\rm sgn}(\Delta_{23})u+{\rm sgn}(\Delta_{24})w )\in V.
$$
Moreover $\|v\|_{\infty}\geq 1$ and for $l\notin \{1,2\} $
$$
|v_l|\leq\frac{|\Delta_{12}|}{|\Delta_{23}|+|\Delta_{24}|}=\frac{|\Delta_{12}|}{\|\Delta_2\|-|\Delta_{12}|}\leq \frac{1}{2}.
$$
once more by renorming vector $v$ we yield the thesis.\\
~\\
Combining the two previous lemmas gives the following theorem.
\begin{thm}\label{beechandwitch}
Let $f,g\in \ell_1^4$ be linearly independent and set
$ V=\ker f\cap\ker g. $
There exist two disjoint sets $I, J\subset[1:4]$ and vectors $v,w\in V$ such that 
$$
\|v\|_\infty=\|w\|_\infty=1 \;\;\;\textit{ and }\;\;\; \max_{i\notin I}|v_i|\leq\frac{1}{2}, \; \max_{i\notin J}|w_i|\leq\frac{1}{2}.
$$
\end{thm}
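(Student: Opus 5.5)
The plan is to build a graph of admissible pairs from Lemma~\ref{godorbed}. If this graph contains two disjoint edges, the statement follows directly. If it does not, the graph is forced to be a star, and in that case Lemma~\ref{lem1} supplies a singleton $I$, while $J$ is taken to be its complement.

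\textbf{Step 1: the pair graph.} Fix $j\in[1:4]$. The row $\Delta_j$ has $\Delta_{jj}=0$ and three off-diagonal entries $|\Delta_{ji}|=|\Delta_{ij}|$, $i\neq j$, whose sum is $\|\Delta_j\|_1$. Hence the smallest of them satisfies $3\min_{i\ne j}|\Delta_{ij}|\le\|\Delta_j\|_1$. Choose a minimizer $i(j)$. Lemma~\ref{godorbed}, applied to the pair $(i(j),j)$, then gives $v\in V$ with $\|v\|_\infty=1$ and $|v_l|\le\frac12$ for all $l\notin\{i(j),j\}$. Form the graph $G$ on $[1:4]$ with edges $\{j,i(j)\}$, $j\in[1:4]$. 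Every vertex of $G$ has degree at least one.

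\textbf{Step 2: two disjoint edges.} Suppose $G$ contains two disjoint edges $\{a,b\}$ and $\{c,d\}$. Put $I=\{a,b\}$ and $J=\{c,d\}$, and let $v,w$ be the vectors from Lemma~\ref{godorbed} for these two pairs. These satisfy the statement.

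\textbf{Step 3: no two disjoint edges.} A graph on four vertices without isolated vertices and without a perfect matching must be a star $K_{1,3}$ with some centre $k$. (A triangle would leave an isolated vertex.) Thus $i(l)=k$ for every $l\ne k$.

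Before concluding, we use the freedom in choosing minimizers. Suppose that for some $l\ne k$ all three entries $|\Delta_{li}|$, $i\ne l$, were equal. Then we could choose $i(l)=m$ with $m\notin\{k,l\}$. Letting $l'$ be the third vertex different from $k$, the edges $\{l,m\}$ and $\{l',k\}$ are disjoint, and we are back in Step 2. Hence we may assume that whenever $G$ is a star, each row $l\ne k$ has a unique minimizer, namely $k$. In particular $|\Delta_{lk}|$ is strictly below the average, i.e. $3|\Delta_{lk}|<\|\Delta_l\|_1$ for all $l\ne k$. This is exactly the hypothesis of Lemma~\ref{lem1}, which yields $v\in V$ with $|v_k|=1$ and $|v_i|\le\frac12$ for $i\ne k$. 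Put $I=\{k\}$.

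For $w$, put $J=[1:4]\setminus\{k\}$. Since $\dim V\ge 2$, the subspace $V\cap\ker e_k$ is nonzero. Take a nonzero $w$ in it and normalize so that $\|w\|_\infty=1$. Then $\max_{i\notin J}|w_i|=|w_k|=0\le\frac12$, and $I\cap J=\varnothing$.

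The main obstacle is Step 3: the star case must yield the \emph{strict} inequality required by Lemma~\ref{lem1}. This is handled by the tie-breaking choice of minimizers described above. Everything else is direct bookkeeping with Lemmas~\ref{lem1} and \ref{godorbed}.
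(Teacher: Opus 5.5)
Your proof is correct and uses the same ingredients as the paper's: Lemma~\ref{godorbed} to certify pairs, and Lemma~\ref{lem1} together with the complement $[1:4]\setminus\{k\}$ for the leftover configuration. You run the paper's contradiction argument constructively: your star of minimizer edges centred at $k$ plays the role of the paper's triangle of bad pairs avoiding $k$, your remark that a row minimum is at most the average excludes the paper's ``all $\{i,k\}$ bad'' case, and the only extra cost is the tie-breaking needed for strictness in Lemma~\ref{lem1}, which the paper gets for free from badness and which you handle correctly (excluding the all-equal case already gives strictness, so the stronger ``unique minimizer'' claim is unnecessary, though the same switching argument justifies it).
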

{\sc Proof.} For convenience, we say that a set $I\subset [1:4]$ is good if there exists vector $v\in V$ such that $\|v\|_{\infty}=1$ and $\max_{i\notin I}|v_i|\leq\frac{1}{2},$ and bad otherwise.  Assume, to the contrary, that no two disjoint sets
are good. Thus, in each of the three complementary pairs
$$
\{1,2\},\{3,4\};\qquad
\{1,3\},\{2,4\};\qquad
\{1,4\},\{2,3\},
$$
at least one set is bad. It follows that either there exist three distinct
indices $i,j,k\in[1:4]$ such that
$
\{i,j\},\;\{j,k\},\;\{k,i\}
$
are all bad, or there exists an index $i\in[1:4]$ such that
$
\{i,k\}
$
is bad for every $k\neq i$.
We begin with the first case. After relabeling the coordinates, we may
assume that $\{1,2\},\{2,3\},\{3,1\}$ are bad. By badness of $\{1,2\}$ and $\{3,1\}$ using  Lemma \ref{godorbed} we get
\begin{align*}
    \|\Delta_1\|_1<3|\Delta_{12}| 
    \;\;\;\textrm{ and }\;\;\;
    \|\Delta_1\|_1<3|\Delta_{13}|. 
\end{align*}
Adding the above inequalities yields
\begin{align*}
    2\|\Delta_1\|_1<3|\Delta_{12}|+3|\Delta_{13}|=3\|\Delta_1\|_1-3|\Delta_{14}|
\end{align*}
and consequently 
$$
3|\Delta_{14}|<\|\Delta_1\|_1.
$$
Analogously by badness of $\{1,2\}, \{2,3\}$ and $\{1,3\}, \{2,3\}$ we get
\begin{align*}
    3|\Delta_{24}|<\|\Delta_2\|_1 \;\;\;\textrm{ and }\;\;\;  3|\Delta_{34}|<\|\Delta_3\|_1.
\end{align*}
Therefore by Lemma \ref{lem1} set $\{4\}$ is good. Set $\{1,2,3\}$ also will be good because $V$ is two dimensional space, so there has to exist $v\in V$ such that $v_4=0,$ and $\|v\|_{\infty}=1,$ a contradiction.\\
Now consider the remaining case and assume that for some $i\in[1:4]$ every set $\{i,k\}$ is bad if $k\neq i.$ By Lemma \ref{godorbed} for all $k\neq i$
$$
\|\Delta_i\|_{1}<3|\Delta_{ik}|.
$$
Adding these inequalities, we obtain $3\|\Delta_i\|_1<3\|\Delta_i\|_1,$ a contradiction.\\
~\\
The preceding geometric argument, together with Lemma~\ref{conajmniej}, yields the following result for projections onto codimension-two subspaces.
\begin{thm}\label{base}
Let $N \geq 4$, and let $V \subset \ell_\infty^N$ be a subspace of dimension $N-2$.
Let $P = (p_{ij})_{i,j=1}^N : \ell_\infty^N \to V$  be a projection satisfying
$\|P\| \leq 1 + \varepsilon$ for some $ \varepsilon \geq 0 $.
Then there exist pairwise disjoint nonempty sets
$A_1,\ldots,A_{N-2}\subset[1:N]$ and indices $k_i\in A_i$ such that
$$
\sum_{l\in A_i}|p_{k_i l}|\geq1-\varepsilon,\;\;
{\it for }\;\; i\in[1:N-2].
$$
\end{thm}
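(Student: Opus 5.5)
The plan is to apply Lemma~\ref{conajmniej} with $m=2$, which controls all but at most four diagonal entries. On those coordinates we take singleton groups. We then handle the (at most four) remaining coordinates at once with Theorem~\ref{beechandwitch}.

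\emph{Step 1 (singletons).} By Lemma~\ref{conajmniej} with $m=2$, the set $B=\{i:|p_{ii}|<1-\varepsilon\}$ has $|B|\le 4$. Since $N\ge4$, choose $S\subset[1:N]$ with $|S|=4$ and $B\subset S$. For each of the $N-4$ indices $i\notin S$ put $A=\{i\}$ and $k=i$. Then $\sum_{l\in A}|p_{kl}|=|p_{ii}|\ge1-\varepsilon$.

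\emph{Step 2 (reduction to $\ell_\infty^4$).} Consider
$$W=\{v\in V: v_i=0\text{ for } i\notin S\},$$
viewed as a subspace of $\mathbb R^S\cong\ell_\infty^4$. Since $V=\ker F\cap\ker G$ for some $F,G\in\ell_1^N$, we have $W=\ker f\cap\ker g$ with $f=F|_S$ and $g=G|_S$. Hence $\dim W\ge 2$.
\begin{itemize}
\item If $\dim W=2$, then $f,g$ are linearly independent.
\item If $\dim W>2$, replace $W$ by any two-dimensional subspace of it. That subspace is still the intersection of kernels of two independent functionals on $\mathbb R^S$.
\end{itemize}
Theorem~\ref{beechandwitch} then gives disjoint $I,J\subset S$ and $v,w\in W\subset V$ (extended by zero outside $S$) with $\|v\|_\infty=\|w\|_\infty=1$, $|v_l|\le\frac12$ for $l\notin I$, and $|w_l|\le\frac12$ for $l\notin J$. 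Because $\|v\|_\infty=1$ while $|v_l|\le\frac12$ off $I$, there is $k\in I$ with $|v_k|=1$. In particular $I\neq\varnothing$, and likewise $J\ne\varnothing$ with some $k'\in J$, $|w_{k'}|=1$.

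\emph{Step 3 (row estimate).} Since $v\in V$, we have $Pv=v$. Write $s=\sum_{l\in I}|p_{kl}|$. Using $\operatorname{supp}v\subset S$ and $\sum_l|p_{kl}|\le\|P\|\le1+\varepsilon$,
$$1=|v_k|=\Bigl|\sum_{l\in S}p_{kl}v_l\Bigr|\le s+\tfrac12\sum_{l\in S\setminus I}|p_{kl}|\le s+\tfrac12(1+\varepsilon-s).$$
This gives $s\ge1-\varepsilon$. The same computation for $w$ and $k'$ gives $\sum_{l\in J}|p_{k'l}|\ge1-\varepsilon$.

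Setting $A_{N-3}=I$ with $k_{N-3}=k$, and $A_{N-2}=J$ with $k_{N-2}=k'$, we get $N-2$ nonempty, pairwise disjoint sets, because $I,J\subset S$ are disjoint and the singletons lie outside $S$.

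The main point to check is the reduction in Step 2. We need Theorem~\ref{beechandwitch} applied to $W$ to produce genuine elements of $V$, and the degenerate case $\dim W>2$ must be handled. Both are settled by the kernel description of $W$ and by passing to a two-dimensional subspace.
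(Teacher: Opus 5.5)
Your proposal is correct and follows essentially the same route as the paper's proof. You take singletons from Lemma~\ref{conajmniej}, pass to a two-dimensional subspace of the vectors of $V$ supported on the remaining four coordinates, apply Theorem~\ref{beechandwitch}, and finish with the same row estimate; your explicit check that this subspace is cut out by two independent functionals on $\mathbb{R}^S$ is a detail the paper leaves implicit.
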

{\sc Proof.}
By Lemma~\ref{conajmniej}, after relabeling the coordinates we may assume that
$
|p_{ii}|\geq1-\varepsilon,$ for $i\in[5:N].$
Hence $\{5\},\ldots,\{N\}$ already give $N-4$ of the required sets.
Set
$$
\widetilde W:=\{x\in V:x_i=0,\ i\in[5:N]\}.
$$
Since $\dim\widetilde W\geq2$, choose a two-dimensional subspace
$W\subset\widetilde W$ and identify it with its restriction to the first
four coordinates. By the Theorem~\ref{beechandwitch}, there exist disjoint nonempty sets
$I,J\subset[1:4]$ and $v,w\in W$ such that
$$
\|v\|_\infty=\|w\|_\infty=1,
\qquad
|v_i|\leq\frac12\ (i\notin I),
\qquad
|w_i|\leq\frac12\ (i\notin J).
$$
Let $i\in I$ be such that $|v_i|=1$.
As $v\in V$, we have $Pv=v$, and therefore
\begin{align*}
1
=|v_i|
&=
\Big|\sum_{l=1}^N p_{il}v_l\Big| 
\leq
\sum_{l\in I}|p_{il}||v_l|
+
\sum_{l\notin I}|p_{il}||v_l| 
\leq
\sum_{l\in I}|p_{il}|
+
\frac12\sum_{l\notin I}|p_{il}| \\
&=
\frac12\sum_{l\in I}|p_{il}|
+
\frac12\sum_{l=1}^N|p_{il}| 
\leq
\frac12\sum_{l\in I}|p_{il}|
+
\frac12(1+\varepsilon).
\end{align*}
Hence
$$
\sum_{l\in I}|p_{il}|\geq1-\varepsilon.
$$
The analogous argument gives $j\in J$ such that
$$
\sum_{l\in J}|p_{jl}|\geq1-\varepsilon.
$$
Thus $I,J,\{5\},\ldots,\{N\}$ have the required properties.
\begin{thm}\label{Bazur}
Let $V\subset\ell_\infty^N$ be a subspace of dimension $N-2$. If
$\lambda(V)<\frac43,$ then
$$
d(V,\ell_\infty^{N-2})
\leq
\frac{\lambda(V)}{4-3\lambda(V)}.
$$
\end{thm}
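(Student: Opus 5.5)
The plan is to build an explicit isomorphism $T:\ell_\infty^{N-2}\to V$ from signed sums of groups of columns of a near-optimal projection. Theorem~\ref{base} supplies the groups, and the norms of $T$ and $T^{-1}$ are then read off directly.

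\emph{Reductions.} If $N\leq 3$, then $\dim V\leq 1$. A one-dimensional space is isometric to $\ell_\infty^1$, so $d=1\leq\lambda(V)/(4-3\lambda(V))$; the latter inequality holds because $\lambda\geq 1$ gives $4-3\lambda\leq 1\leq\lambda$. So we may assume $N\geq 4$. By \eqref{absrel} and compactness of the set of projections $\ell_\infty^N\to V$ with bounded norm, there is a projection $P=(p_{ij})$ onto $V$ with $\|P\|=\lambda(V)=:1+\varepsilon$. The hypothesis $\lambda(V)<4/3$ means $0\leq\varepsilon<1/3$. Theorem~\ref{base} then gives pairwise disjoint nonempty sets $A_1,\dots,A_{N-2}$ and indices $k_i\in A_i$ with $\sum_{l\in A_i}|p_{k_il}|\geq 1-\varepsilon$. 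Since each row of $P$ has $\ell_1$-norm at most $1+\varepsilon$, this also yields
$$\sum_{l\notin A_i}|p_{k_il}|\leq 2\varepsilon .$$

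\emph{The map and its norm.} Put $s_l=\mathrm{sgn}(p_{k_il})$ for $l\in A_i$, and $s_l=0$ for $l\notin\bigcup_i A_i$. For $a\in\mathbb R^{N-2}$ let
$$y(a)=\sum_{i=1}^{N-2}a_i\sum_{l\in A_i}s_le_l$$
and define $Ta=Py(a)\in V$. Because the $A_i$ are disjoint, $\|y(a)\|_\infty\leq\|a\|_\infty$, and therefore $\|T\|\leq\|P\|=1+\varepsilon$.

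\emph{Lower bound.} Fix $a$ and choose $i$ with $|a_i|=\|a\|_\infty$. Evaluate coordinate $k_i$:
$$(Ta)_{k_i}=a_i\sum_{l\in A_i}|p_{k_il}|+\sum_{l\notin A_i}p_{k_il}\,y(a)_l .$$
The first term has modulus at least $(1-\varepsilon)\|a\|_\infty$. The second has modulus at most $2\varepsilon\|a\|_\infty$ by the bound above. Hence $\|Ta\|_\infty\geq(1-3\varepsilon)\|a\|_\infty$, which is positive since $\varepsilon<1/3$. So $T$ is injective, and as $\dim V=N-2$ it is an isomorphism with $\|T^{-1}\|\leq 1/(1-3\varepsilon)$. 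Consequently
$$d(V,\ell_\infty^{N-2})\leq\frac{1+\varepsilon}{1-3\varepsilon}=\frac{\lambda(V)}{4-3\lambda(V)}.$$

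The only substantive step is the lower bound on $\|Ta\|_\infty$. It relies on two things: choosing the coordinate $k_i$ attached to the largest $|a_i|$, and the fact that the off-group mass of row $k_i$ is at most $2\varepsilon$. Everything else is bookkeeping, since all the geometric difficulty has already been absorbed into Theorem~\ref{base}.
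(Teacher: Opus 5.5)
Your proof is correct and is essentially the paper's argument: your map $T$ is the inverse of the paper's $T$, since $Te_i=\sum_{l\in A_i}\mathrm{sgn}(p_{k_il})\,p_l=q_i$, and your off-group bound $\sum_{l\notin A_i}|p_{k_il}|\leq 2\varepsilon$ is the paper's estimate $2|(q_m)_{k_m}|-\sum_i|(q_i)_{k_m}|\geq 1-3\varepsilon$ rearranged. Your explicit handling of $N\leq 3$ (where Theorem~\ref{base} does not apply), the compactness justification for an optimal projection, and the uniform treatment of $\varepsilon=0$ are small bookkeeping improvements over the written proof.
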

{\sc Proof.}
Since $\lambda(V)<\frac43,$ there exists $0\leq\varepsilon < \frac13 $ such that $\lambda(V)=1+\varepsilon.$ If $\varepsilon=0$ it is well known that  $d(V,\ell_\infty^{N-2})=1$ (see e.g. \cite{BB}) and the above statement is true. So it is enough to consider $0<\varepsilon < \frac13. $ Since $\lambda(V)=\lambda(V,\ell_{\infty}^N)$  there exists   a projection $P:\ell_\infty^N\to V$ such that
$$
\|P\|= 1+\varepsilon=\lambda(V).
$$
By Theorem~\ref{base}, there exist pairwise disjoint nonempty sets
$A_1,\ldots,A_{N-2}\subset[1:N]$ and indices $k_i\in A_i$ such that
$$
\sum_{j\in A_i}|p_{k_i j}|\geq1-\varepsilon,
\qquad i\in[1:N-2].
$$
Let $p_j$ denote the $j$-th column of $P$ and set
$$
q_i:=\sum_{j\in A_i}{\rm sgn}(p_{k_i j})p_j.
$$
Since each $p_j$ belongs to $V$, we have $q_i\in V$. Moreover,
\begin{equation}\label{qdiag}
(q_i)_{k_i}
=
\sum_{j\in A_i}{\rm sgn}(p_{k_i j})p_{k_i j}
=
\sum_{j\in A_i}|p_{k_i j}|
\geq1-\varepsilon.
\end{equation}
The disjointness of the sets $A_i$ implies that, for every
$l\in[1:N]$,
\begin{align}
\sum_{i=1}^{N-2}|(q_i)_l|
\leq
\sum_{i=1}^{N-2}\sum_{j\in A_i}|p_{lj}| 
\leq
\sum_{j=1}^N|p_{lj}|
\leq1+\varepsilon.
\label{qineq}
\end{align}
We first show that $q_1,\ldots,q_{N-2}$ are linearly independent.
Suppose that
$\sum_{i=1}^{N-2}a_iq_i=0$
and choose $m$ such that $|a_m|=\max_i|a_i|$. 
Then, using \eqref{qdiag} and \eqref{qineq}, we obtain
\begin{align*}
0 &= \Big| \sum_{i=1}^{N-2}a_i(q_i)_{k_m} \Big|
\geq |a_m||(q_m)_{k_m}|- \sum_{i\neq m}|a_i||(q_i)_{k_m}|
\geq |a_m|\Big( |(q_m)_{k_m}|- \sum_{i\neq m}|(q_i)_{k_m}|\Big)\\
&=|a_m| \Big( 2|(q_m)_{k_m}| - \sum_{i=1}^{N-2}|(q_i)_{k_m}| \Big)
\geq |a_m|\bigl(2(1-\varepsilon)-(1+\varepsilon)\bigr)
=|a_m|(1-3\varepsilon).
\end{align*}
Since $\varepsilon<\frac13$ we get $|a_m|=0,$ and consequently $a_i=0$ for all $i\in[1,N-2].$
Thus $q_1,\ldots,q_{N-2}$ are linearly independent and therefore form a basis of $V$.
Define isomorphism
$$
T:V\longrightarrow\ell_\infty^{N-2},
\qquad
T\Big(\sum_{i=1}^{N-2}a_iq_i\Big)
=
(a_1,\ldots,a_{N-2}).
$$
Let $a=(a_1,\ldots,a_{N-2})\in\ell_\infty^{N-2}$. By \eqref{qineq},
\begin{align*}
\|T^{-1}a\|_\infty
&= \Big\|\sum_{i=1}^{N-2}a_iq_i\Big\|_\infty
\leq \|a\|_\infty \max_{l\in[1:N]}\sum_{i=1}^{N-2}|(q_i)_l|
\leq
(1+\varepsilon)\|a\|_\infty.
\end{align*}
Thus
\begin{equation}\label{Tinverse}
\|T^{-1}\|\leq1+\varepsilon.
\end{equation}
On the other hand, choose $m$ such that $|a_m|=\|a\|_\infty$. Using
\eqref{qdiag} and \eqref{qineq} once again, we get
\begin{align*}
\Big\|\sum_{i=1}^{N-2}a_iq_i\Big\|_\infty
\geq \Big| \sum_{i=1}^{N-2}a_i(q_i)_{k_m} \Big|
\geq |a_m| \Big( 2|(q_m)_{k_m}|- \sum_{i=1}^{N-2}|(q_i)_{k_m}| \Big)
\geq (1-3\varepsilon)\|a\|_\infty.
\end{align*}
Consequently,
\begin{equation}\label{Tnorm}
\|T\|\leq\frac1{1-3\varepsilon}.
\end{equation}
Combining \eqref{Tinverse} and \eqref{Tnorm}, we obtain
$$
d(V,\ell_\infty^{N-2})
\leq
\|T\|\,\|T^{-1}\|
\leq
\frac{1+\varepsilon}{1-3\varepsilon}=\frac{\lambda(V)}{4-3\lambda(V)}
$$
 as required.

\section{Discussion}\label{sec:discussion}

The reasoning presented in Theorem~\ref{Bazur} can be generalized to any $n$-dimensional
subspace $X\subset\ell_\infty^N$, provided that an analog of
Theorem~\ref{base} is available. Specifically, it is sufficient to show
that every projection $P=(p_{ij})_{i,j=1}^N$ onto $X$ with
$\|P\|\leq1+\varepsilon<\frac43$ admits pairwise disjoint nonempty sets
$A_1,\ldots,A_n\subset[1:N]$ and indices $k_i\in A_i$ satisfying
\begin{equation}\label{colcomb}
\sum_{j\in A_i}|p_{k_i j}|\geq1-\varepsilon,
\qquad i\in[1:n].
\end{equation}
The remainder of the proof of Theorem~\ref{Bazur} then proceeds without
modification and yields
$$
d(X,\ell_\infty^n)
\leq
\frac{1+\varepsilon}{1-3\varepsilon}.
$$
For subspaces of codimension two, this result is precisely the content of
Theorem~\ref{base}, the proof of which relies on the geometric result
Theorem~\ref{beechandwitch}. In the case of hyperplanes, the 
analog of this result is available.
\begin{prop}\label{hypergeom}
Let $V=\ker f\subset\ell_\infty^N$, where
$0\neq f=(f_1,\ldots,f_N)\in\ell_1^N$. Then there exist
$N-1$ pairwise disjoint nonempty sets
$A_1,\ldots,A_{N-1}\subset[1:N]$ and vectors
$v^1,\ldots,v^{N-1}\in V$ such that
\begin{equation}\label{HilaryHahn}
\|v^i\|_\infty=1
\qquad\text{and}\qquad
|v^i_j|\leq\frac12
\quad\text{for }j\notin A_i.
\end{equation}
\end{prop}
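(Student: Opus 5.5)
Order the coordinates so that $|f_1|\geq|f_2|\geq\cdots\geq|f_N|$, using a permutation of coordinates, which changes neither the hypotheses nor the conclusion. Let $S=\{j: f_j\neq 0\}$ and $s=|S|$. Coordinates with $f_j=0$ are easy: for each such $j$ the vector $e_j$ lies in $V$, so we take $A=\{j\}$ and $v=e_j$. It remains to produce $s-1$ pairwise disjoint sets inside $S$ with suitable vectors. Together with the $N-s$ singletons outside $S$ this gives $N-1$ sets in total, as required. So we may assume $f_j\neq 0$ for all $j$ and $s=N$.

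\textbf{Pairing construction.} For each index $i\geq 2$ consider the vector
$$
u^i := e_i - \frac{f_i}{f_1}\,e_1 .
$$
Clearly $f(u^i)=0$, so $u^i\in V$. Since $|f_i|\leq|f_1|$, we have $\|u^i\|_\infty=1$, and $u^i$ vanishes off $\{1,i\}$. Taking $A_i=\{1,i\}$ would give the bound $0\leq\frac12$ outside $A_i$, but these sets are not disjoint. The plan is therefore to drop coordinate $1$ from $A_i$, setting $A_i=\{i\}$, and to require $|u^i_1|=|f_i|/|f_1|\leq\frac12$. This inequality holds exactly when $|f_i|\leq\frac12|f_1|$. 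So the bad indices are those $i\geq2$ with $|f_i|>\frac12|f_1|$, that is, the coordinates whose weight is comparable to the largest one.

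\textbf{Handling the bad indices.} Let $B=\{i: |f_i|>\tfrac12|f_1|\}$; it contains $1$. For the indices not in $B$, use the singletons $A_i=\{i\}$ with $v^i=u^i$ as above. These sets are disjoint from each other and from $B$. For $B=\{1=b_1,\dots,b_r\}$ we need $r-1$ disjoint nonempty subsets of $B$, each with a vector whose coordinates are at most $\frac12$ outside the subset. Such a vector may be supported in $B$, since outside $B$ its entries are then zero.

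This reduces the problem to the case where all weights lie within a factor of $2$ of each other, with $r$ coordinates. Normalize and multiply by signs so that $f_b>0$ for $b\in B$, with ratios in $(\frac12,1]$. Split $B$ into consecutive pairs $\{b_1,b_2\},\{b_3,b_4\},\dots$, plus possibly one leftover singleton. A pair $\{b,b'\}$ carries the vector $e_b-\frac{f_b}{f_{b'}}e_{b'}$, with $\|\cdot\|_\infty$ normalized to $1$; it vanishes outside the pair. A pairing alone, however, yields only about $r/2$ sets, not $r-1$. We therefore need better vectors: for a singleton $\{b\}$, a vector with entry $1$ at $b$ and entries of size at most $\frac12$ elsewhere in $B$. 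Take
$$
v=e_b-c\sum_{b'\in B\setminus\{b\}} e_{b'}, \qquad c=\frac{f_b}{\sum_{b'\neq b} f_{b'}} .
$$
When $r\geq 3$ we have $c\leq 1/(2\cdot\tfrac12)=1$, which is not enough as written. But $\sum_{b'\neq b}f_{b'}>(r-1)\tfrac12 f_1$ and $f_b\leq f_1$, so $c<2/(r-1)\leq\frac12$ once $r\geq5$. Thus for $r\geq5$ every singleton $\{b\}$ with $b\neq b_1$ works, giving $r-1$ sets. The small cases $r\in\{2,3,4\}$ are handled separately. For $r=2$, take the pair $\{b_1,b_2\}$. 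For $r=3,4$, combine one pair with singletons, or choose $b$ as the minimal-weight coordinate so that $c$ is small, using an explicit case check.

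\textbf{Main obstacle.} The main obstacle is the small-$r$ case analysis, especially $r=3$ and $r=4$ with nearly equal weights. The averaging bound there only gives $c\leq 1$, so one must choose the subsets cleverly. For example, for $r=3$ use the sets $\{b_1,b_2\}$ and $\{b_3\}$; the vector for $\{b_3\}$ has coordinates $-\frac{f_{b_3}}{f_{b_1}+f_{b_2}}\cdot(1,1)$, whose size is at most $\frac12$ because $f_{b_3}$ is the smallest weight. For $r=4$ use the sets $\{b_1,b_2\},\{b_3\},\{b_4\}$ with the analogous estimate, where $c\leq f_{b_j}/(f_{b_1}+f_{b_2}+f_{b_{j'}})<\frac12$. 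I expect these estimates to go through by ordering the weights, but I would verify each case explicitly.
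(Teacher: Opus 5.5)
Your proof is correct. The inequalities you write down already settle the cases you say you would still check: for $r\ge5$ you have $c<2/(r-1)\le\frac12$, and for $r\in\{3,4\}$ every $b_j$ with $j\ge3$ satisfies $2f_{b_j}\le f_{b_1}+f_{b_2}$, so $c\le\frac12$.

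Your route differs from the paper's, which uses no sorting and no case split. After normalizing $\|f\|_1=1$, the paper tests each singleton $\{i\}$ with the vector $e_i-f_i\bigl(\sum_{k\neq i}|f_k|\bigr)^{-1}\sum_{j\neq i}\mathrm{sgn}(f_j)e_j$, spread over \emph{all} other coordinates. This works exactly when $3|f_i|\le\|f\|_1$, and at most two indices can violate that. So either $N-1$ singletons are available, or the two violators $j,k$ are merged into $\{j,k\}$ with the vector $f_ke_j-f_je_k$.

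You instead attach the two-coordinate vectors $e_i-(f_i/f_1)e_1$ to the light indices and use the spread vector only inside $B$. There the test becomes the more restrictive $3f_b\le\sum_{b'\in B}f_{b'}$, which is why you need the threshold $|f_1|/2$ and the split on $r=|B|$. All your cases are instances of one pigeonhole fact inside $B$: at most two elements fail the test, and failure passes to heavier coordinates, so the failures lie among $b_1,b_2$.

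Your construction gains only the sparsity of the vectors for light coordinates, which the later application does not need. The paper's single threshold $\|f\|_1/3$ gives the result in a few lines.
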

{\sc Proof.}
We may assume that $\|f\|_1=1$. First, we will show that if for some $i$
$3|f_i|\leq1,$ then $\{i\}$ has property \eqref{HilaryHahn}.
If $f_i=0$, then $e_i\in V$ and there is nothing to prove. Otherwise,
since $|f_i|\leq\frac12\sum_{j\neq i}|f_j|$ the vector
$ e_i-f_i\Big(\sum_{k\neq i}|f_k|\Big)^{-1}
\sum_{j\neq i}{\rm sgn}(f_j)e_j$
belongs to $V,$ and all coordinates except the $i$-th have
modulus at most $1/2$. Since $\sum_i|f_i|=1$, there can be at most two indices for which
$3|f_i|>1$. If there is at most one such index, we may take
$N-1$ singletons. If there are two, say $j$ and $k$, then all the
remaining singletons have the required property, while $\{j,k\}$ does
as well, since $f_ke_j-f_je_k\in V\setminus\{0\}$ and this vector is supported on $\{j,k\}$.\\
~\\
Combining this proposition with the initial observation in this
section gives
$$
d(V,\ell_\infty^{N-1})
\leq
\frac{\lambda(V)}{4-3\lambda(V)},
$$
for hyperplanes with $\lambda(V)<\frac43.$
It is natural to ask whether the geometric statement used in the proof
of Theorem~\ref{base} extends to higher codimensions. The following
example shows that this is not the case. In fact, its natural analog
fails for $n$-dimensional subspaces of $\ell_\infty^{2n}$ when
$n\geq3$.
\begin{prop}\label{counterhigher}
For every $n\geq3$ there exists an $n$-dimensional subspace
$W_n\subset\ell_\infty^{2n}$ such that there are no pairwise disjoint
nonempty sets $A_1,\ldots,A_n\subset[1:2n]$ and vectors
$w_1,\ldots,w_n\in W_n$ satisfying
$$
\|w_i\|_\infty=1
\qquad\text{and}\qquad
|w_i^j|\leq\frac12
\quad\text{for }j\notin A_i.
$$
\end{prop}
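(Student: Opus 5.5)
The plan is a counting argument. First observe that if $A_1,\ldots,A_n\subset[1:2n]$ are pairwise disjoint, then $\sum_i|A_i|\leq 2n$. It therefore suffices to construct an $n$-dimensional $W_n\subset\ell_\infty^{2n}$ with the following property.

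\emph{(P)} For every $w\in W_n$ with $\|w\|_\infty=1$, the set $L(w):=\{j:|w^j|>\tfrac12\}$ has at least three elements.

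Indeed, any admissible $A_i$ must contain $L(w_i)$, so $|A_i|\geq 3$ for every $i$. This gives $3n\leq\sum_i|A_i|\leq 2n$, which is impossible.

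If (P) turns out to be too strong, I would fall back on a weaker property. Namely, I would require $|L(w)|\geq 2$ for all unit $w$. Then $\sum_i|A_i|\leq 2n$ forces every $A_i$ to equal $L(w_i)$ and to have exactly two elements, so $\{A_1,\ldots,A_n\}$ is a perfect matching of $[1:2n]$. It would then suffice to check that no pair $\{j,k\}$ coming from a perfect matching is of the form $L(w)$ for some unit $w\in W_n$. This is a finite combinatorial check once the structure of the sets $L(w)$ is understood. The threshold $n\geq3$ should enter exactly here, since Theorem~\ref{beechandwitch} shows that for $n=2$ a good disjoint pair always exists.

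For the construction I would take a graph-type subspace
$$
W_n=\{(x_1,\ldots,x_n,\,x_1+x_2,\,x_2+x_3,\,\ldots,\,x_n+x_1):x\in\mathbb R^n\}.
$$
This is a cyclic, symmetric design in which each basic coordinate is coupled to two of the extra coordinates. It is clearly $n$-dimensional. If necessary I would modify it, for instance by using differences $x_i-x_{i+1}$ for some indices, or by rescaling the extra block, to kill degenerate vectors. The cyclic symmetry reduces the verification to normalized vectors with the maximum attained at a fixed coordinate, and then to a short case analysis on the signs and sizes of the neighbouring $x_i$.

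The main obstacle is verifying (P), or its matching variant, for all unit vectors rather than only for the basis vectors. When the norm is attained on an extra coordinate $x_i+x_{i+1}$, the individual $x$-coordinates may all be close to $\tfrac12$. This borderline regime is where the strict inequality $|w^j|>\tfrac12$ has to be checked carefully. I would first test the candidate for $n=3$ by hand, using the extreme points of the unit ball of $W_3$. If it fails there, I would adjust the coupling, for example by taking each extra coordinate to be a combination of three basic coordinates. That raises the number of large coordinates and makes (P) more robust, at the cost of a slightly longer case analysis.
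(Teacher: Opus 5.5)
Your counting reductions are correct. Since $L(w_i)\subseteq A_i$, property (P) gives $3n\le\sum_i|A_i|\le 2n$, and under the weaker property the $A_i$ must form a perfect matching. The gap is that the proposal never produces a subspace that works, and that construction is the entire content of the proposition.

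The candidate you write down is not a counterexample at all. Take $x=\tfrac12(e_i+e_{i+1})$, indices mod $n$. The corresponding vector of your $W_n$ has coordinate $x_i+x_{i+1}$ equal to $1$, and every other coordinate equal to $0$ or $\tfrac12$. So $\{n+1\},\dots,\{2n\}$ are $n$ pairwise disjoint admissible singletons. Both (P) and your fallback fail, in exactly the regime you flagged.

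Your proposed repairs are not carried out, and the obvious ones do not rescue the construction. For instance, take $n=3$ with the extra block scaled by $0.6$. The vector $x=(1,-\tfrac16,-\tfrac16)$ gives $(1,-\tfrac16,-\tfrac16,\tfrac12,-\tfrac15,\tfrac12)$, so by cyclic symmetry $\{1\},\{2\},\{3\}$ are admissible.

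Moreover, (P) on all of $[1:2n]$ is a borderline target. If (P) holds for $W\subset\ell_\infty^N$, LP duality gives, for each $j$, some $d\in W^\perp$ with $2|d_j|>\sum_{k\neq j}|d_k|$. Lemma~\ref{rankus} then shows $\dim W^\perp\ge N/2$, i.e.\ $\dim W\le N/2$. An $n$-dimensional example in $\ell_\infty^{2n}$ would therefore have to sit exactly at this threshold. For $n=2$ none exists, by Theorem~\ref{beechandwitch}.

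The paper never needs (P) globally. It uses an explicit $2$-dimensional $V\subset\ell_\infty^5$ in which every nonzero vector has at least three coordinates exceeding half its norm. It then pads: $W_n=\{(x,s,t_1,t_1,\dots,t_{n-3},t_{n-3}):x\in V\}$. This $W_n$ satisfies neither (P) nor your fallback, since $e_6\in W_n$.

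The counting is instead on the distinct peak indices $k_i$ with $|w_i^{k_i}|=1$:
\begin{itemize}
\item Coordinate $6$ can host at most one peak.
\item Each doubled pair can host at most one peak, because both its entries have modulus one in $w_i$ and hence both lie in $A_i$.
\end{itemize}
So at most $n-2$ peaks lie outside $[1:5]$, and at least two lie inside. Each of those two forces its $A_i$ to contain at least three elements of $[1:5]$, which is impossible for disjoint sets.

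What your plan is missing is precisely such a low-ratio block with (P), together with a padding whose peak capacity is exactly $n-2$.
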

{\sc Proof.} Set vectors
\begin{equation*}
    u:=(1,\tfrac{4}{5},\tfrac{1}{4},-\tfrac{1}{4},-\tfrac{4}{5})\qquad {\rm and }\qquad v:=(0,\tfrac{3}{5},1,1,\tfrac{3}{5}) 
\end{equation*} and define space $V:={\rm span}\{u,v\}.$ We claim that every nonzero
$x\in V$ has at least three coordinates of modulus greater than
$\frac12\|x\|_\infty$. Let
$$
x:=au+bv =
\Big( a,\frac45a+\frac35b,\frac14a+b,
-\frac14a+b,-\frac45a+\frac35b \Big).
$$
Since changing the sign of either coefficient only permutes the
absolute values of the coordinates, we may assume that $a,b\geq0$.
Then $|x_4|\leq x_3$ and $|x_5|\leq x_2$, so
$\|x\|_\infty=\max\{x_1,x_2,x_3\}.$
Suppose that $\|x\|_{\infty}=|x_1|=x_1.$ Observe that $x_2-x_5=\tfrac{8}{5}x_1,$ and consequently
$$
\frac{8}{5}\|x\|_{\infty}=|x_2-x_5|\leq |x_2|+|x_5|\leq\|x\|_{\infty}+\min\{|x_2|,|x_5|\},
$$
which implies $|x_1|,|x_2|,|x_5|>\tfrac{1}{2}\|x\|_{\infty}.$\\
Assume now that $\|x\|_{\infty}=|x_2|.$ Observe that $13x_1+12x_3=20x_2,$ and consequently
$$
20\|x\|_{\infty}\leq 13\|x\|_{\infty}+12\min\{|x_1|,|x_3|\}
$$
which implies $|x_1|,|x_2|,|x_3|>\tfrac{1}{2}\|x\|_{\infty}.$\\
Finally let $\|x\|_{\infty}=|x_3|.$ Since $x_1,x_3\geq 0$, by the previous equality $|x_2|\geq\tfrac{12}{20}\|x\|_{\infty}.$ Moreover, $x_3=\frac{1}{2}x_1+x_4,$ and consequently $|x_3|\leq \frac{3}{2}\max\{|x_1|,|x_4|\},$ Hence $x_3$, $x_2$, and at least one of $x_1,x_4$ have modulus larger than $\tfrac{1}{2}\|x\|_\infty.$\\
For $n\geq3$, we set
$$
W_n:=\{(x,s,t_1,t_1,\ldots,t_{n-3},t_{n-3}):
       x\in V,\ s,t_1,\ldots,t_{n-3}\in\mathbb R\}.
$$
Clearly, $\dim W_n=n$. Suppose that sets $A_i$ and vectors $w^i$
satisfy the stated conditions. Choose $k_i\in A_i$ such that
$|w^i_{k_i}|=1$. Since the sets $A_i$ are pairwise disjoint,
the indices $k_1,\ldots,k_n$ are distinct.
If $k_i$ lies in one of the repeated pairs, both coordinates of that pair have modulus one in $w^i$ and must therefore belong to $A_i$. Consequently, each repeated pair contains at most one selected index. There are $n-3$ such pairs, and the only other coordinate outside $[1:5]$ is $6$. Hence at most $n-2$ of the selected indices lie outside $[1:5]$, so at least two lie in $[1:5]$.
The restrictions of the corresponding vectors to the first five
coordinates belong to $V$ and have norm one. Each therefore has
at least three coordinates of modulus greater than $1/2$, all
of which must belong to the corresponding set $A_i$. This is
impossible, since two disjoint subsets of $[1:5]$ cannot both
contain at least three elements.\\
~\\
Proposition~\ref{counterhigher} shows that the geometric argument used
in the proof of Theorem~\ref{base} does not extend to higher codimension.
It remains possible, however, that the conclusion of
Theorem~\ref{base}, which is all that is needed in the proof of
Theorem~\ref{Bazur}, continues to hold. For the spaces $W_n$ constructed
above this is indeed the case.\\
~\\
We keep the notation from the preceding proof.
\begin{prop}\label{combiningWn}
Let $n\geq3$, and let
$P=(p_{ij}):\ell_\infty^{2n}\to W_n$
be a projection satisfying
$$
\|P\|\leq1+\varepsilon,
\qquad 0\leq\varepsilon\leq\frac13.
$$
Then there exist pairwise disjoint nonempty sets
$A_1,\ldots,A_n\subset[1:2n]$ and indices $k_i\in A_i$ such that
$$
\sum_{j\in A_i}|p_{k_i j}|\geq1-\varepsilon,
\qquad i\in[1:n].
$$
\end{prop}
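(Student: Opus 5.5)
The plan is to handle the $n-2$ coordinates outside $[1:5]$ directly from the structure of $W_n$, and to reduce the problem to finding two suitable disjoint subsets of $[1:5]$. Since $e_6\in W_n$, we have $Pe_6=e_6$, so $p_{66}=1$, and $A=\{6\}$ with $k=6$ works. For each repeated pair $\{a,a+1\}$ (with $a=7,9,\ldots,2n-1$), the vector $e_a+e_{a+1}$ lies in $W_n$. Hence $p_{aa}+p_{a,a+1}=1$, which gives $|p_{aa}|+|p_{a,a+1}|\geq 1\geq 1-\varepsilon$, and $A=\{a,a+1\}$ with $k=a$ works. This produces $n-2$ pairwise disjoint sets covering $[6:2n]$. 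It therefore remains to find two disjoint nonempty sets $A,B\subset[1:5]$ with rows $k\in A$, $k'\in B$ satisfying the required inequality.

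For this I pass to the block $R:=P_{[1:5],[1:5]}$. Every $x\in V$, extended by zeros, lies in $W_n$. Therefore $x=Px$ and, restricting to the first five coordinates, $Rx=x$. The columns of $P$ lie in $W_n$, so the columns of $R$ lie in $V$. Thus $R$ is a projection of $\ell_\infty^5$ onto $V$, and each row of $R$ is a sub-row of $P$, so $\|R\|\leq1+\varepsilon$. Moreover $\operatorname{tr}R=\dim V=2$. There is one elementary tool, exactly as in the proof of Theorem~\ref{base}: if $x\in V$, $|x_k|=1$ and $|x_l|\leq\frac12$ for $l\notin I$, then $\sum_{l\in I}|r_{kl}|\geq1-\varepsilon$. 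By the proof of Proposition~\ref{counterhigher}, each normalized $x\in V$ has at least three "big" coordinates. This tool therefore only yields $3$-element (or larger) sets, and I must obtain the complementary $2$-element set by a different argument.

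So the plan for $[1:5]$ is as follows. I take $B=\{k,l\}$ a two-element set and $A=[1:5]\setminus B$. For $A$, I want a vector $x\in V$ whose big coordinates, including one of modulus $\|x\|_\infty$, all lie in $A$. From the explicit form $x=au+bv$ and the case analysis in the proof of Proposition~\ref{counterhigher}, the possible "big-coordinate triples" are $\{1,2,5\}$, $\{1,2,3\}$, $\{2,3,1\}$, $\{2,3,4\}$, and their images under the sign symmetries. Concretely, $u$ handles $\{1,2,5\}$, $v$ handles $\{2,3,4,5\}$, and suitable combinations give $\{1,2,3\}$ and $\{2,3,4\}$. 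For $B$, I use the row inequality directly. I would argue with the trace together with the norm bound: since $\sum_k r_{kk}=2$ and each row has $\ell_1$-norm at most $1+\varepsilon$, I expect a row $k$ and a partner $l$ with $|r_{kk}|+|r_{kl}|\geq1-\varepsilon$ to exist among the coordinates complementary to an available triple. In the first attempt I take $B=\{3,4\}$ with $A=\{1,2,5\}$, and $B=\{1,5\}$ or $\{4,5\}$ with $A=\{1,2,3\}$ or $\{2,3,4\}$. I would also use the linear relations $x_2-x_5=\tfrac85x_1$, $13x_1+12x_3=20x_2$, $x_3=\tfrac12x_1+x_4$ satisfied by the columns of $R$, which lie in $V$. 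These express entries of a row of $R$ in terms of others and should force the needed inequality on $B$ whenever it fails on another candidate.

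The main obstacle is this last step: showing that for at least one of the finitely many candidate splittings $A\sqcup B=[1:5]$, the two-element set $B$ satisfies the row inequality. My first attempt is a contradiction argument. I assume that every candidate $B$ fails, so that $|r_{kk}|+|r_{kl}|<1-\varepsilon$ for the relevant pairs. I then sum these inequalities, using $\operatorname{tr}R=2$, the row bounds, and the column relations coming from $V$. The aim is to contradict $\varepsilon\leq\frac13$, in the same spirit as the summation arguments in Theorem~\ref{beechandwitch}. If that fails, I would fall back on an explicit parametrization of all projections $R$ onto $V$ and a direct finite case check.
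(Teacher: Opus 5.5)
Your reduction is sound and matches the paper. The sets $\{6\}$ and $\{2j+5,2j+6\}$ are handled exactly as there, $R=P_{[1:5],[1:5]}$ is a projection onto $V$ with $\|R\|\le1+\varepsilon$, and your $\tfrac12$-tool does handle the three-element set. You are also right that this tool can never produce a two-element set.

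\textbf{The gap.} Finding that two-element set is the entire content of the proposition, and your proposal does not supply it. Expecting a row $k$ and a partner $l$ to exist, or planning to sum the failing inequalities, is not an argument. The trace is too weak to help: it only says that the diagonal of $R$ averages $\tfrac25$, far below the required $1-\varepsilon\ge\tfrac23$. Neither the contradiction argument nor the fallback parametrization is actually carried out.

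\textbf{The missing idea.} The paper couples two rows to obtain a dichotomy rather than a fixed pair. With $z=(1,1,1,-1,-1)$, it uses the test vectors
\[
y_1=u+\tfrac13v-\tfrac25(e_1+e_2)-\tfrac14z,\qquad y_2=-u+\tfrac13v-\tfrac25(e_5-e_1)+\tfrac14z,
\]
both of sup-norm $\tfrac7{20}$. Since $Ru=u$ and $Rv=v$, the second coordinate of $Ry_1$ and the fifth coordinate of $Ry_2$ give
$1\le\tfrac25(r_{21}+r_{22})+\tfrac14(Rz)_2+\tfrac7{20}(1+\varepsilon)$ and $1\le\tfrac25(r_{55}-r_{51})-\tfrac14(Rz)_5+\tfrac7{20}(1+\varepsilon)$.

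Add these two inequalities. Then eliminate the $z$-terms using $(Rz)_2-(Rz)_5=\tfrac85(Rz)_1$ and $|(Rz)_1|\le1+\varepsilon$. This is your relation $x_2-x_5=\tfrac85x_1$, but used as an identity between rows $1,2,5$ of $R$ (namely $r_2-r_5=\tfrac85r_1$), not as a relation inside a single row. The result is $r_{21}+r_{22}-r_{51}+r_{55}\ge\tfrac{9-11\varepsilon}4\ge2(1-\varepsilon)$. Hence $\{1,2\}$ with row $2$, or $\{1,5\}$ with row $5$, satisfies the bound.

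The coupling is essential here: row $2$ alone with $y_1$ yields only $r_{21}+r_{22}\ge1-\tfrac32\varepsilon$. The complementary triples $\{3,4,5\}$ and $\{2,3,4\}$ are then handled by your tool, using $x=-u+3v$ (row $4$) and $x=u+3v$ (row $3$), where even the better ratio $\tfrac4{13}$ is available.
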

{\sc Proof.} Consider the submatrix $\widetilde P=(p_{ij})_{i,j=1}^5$. Observe that for $y\in\ell_\infty^5$,  $P(y,0,\ldots,0)\in W_n$, and consequently its first five
coordinates belong to $V$. These coordinates are precisely
$\widetilde P y$, and hence $\widetilde P$ maps $\ell_\infty^5$ into $V.$ Moreover, for every $x\in V$, the vector $(x,0,\ldots,0)$
belongs to $W_n$, and therefore
$P(x,0,\ldots,0)=(x,0,\ldots,0).$
Taking the first five coordinates gives $\widetilde Px=x$.
Thus $\widetilde P$ is a projection onto $V$, and
$\|\widetilde P\|\leq\|P\|\leq1+\varepsilon.$
We first show that
\begin{equation}\label{pairalternative}
\max\{|p_{21}|+|p_{22}|,\ |p_{51}|+|p_{55}|\}
\geq1-\varepsilon.
\end{equation}
Recall that $V={\rm span}\{u,v\}$ where
$$
 u=(1,\tfrac{4}{5},\tfrac{1}{4},-\tfrac{1}{4},-\tfrac{4}{5})\qquad {\rm and }\qquad v=(0,\tfrac{3}{5},1,1,\tfrac{3}{5}). 
 $$
For $z=(1,1,1,-1,-1)$, direct calculation gives
\begin{align*}
u+\tfrac13v-\tfrac25(e_1+e_2)-\tfrac14z
&=\Big(\tfrac7{20},\tfrac7{20},\tfrac13,\tfrac13,-\tfrac7{20}\Big),\\
-u+\tfrac13v-\tfrac25(e_5-e_1)+\tfrac14z
&=\Big(-\tfrac7{20},-\tfrac7{20},\tfrac13,\tfrac13,\tfrac7{20}\Big).
\end{align*}
Since $\|\widetilde P\|\leq1+\varepsilon$, the image of each of
these vectors under $\widetilde P$ has all coordinates bounded
in modulus by $\frac7{20}(1+\varepsilon)$. Thus, taking the second
coordinate of the first image and the fifth coordinate of the second,
we obtain
 \begin{align*}
1&\leq
\frac25(p_{21}+p_{22})
+\frac14(\widetilde Pz)_2+\frac7{20}(1+\varepsilon),\\
1
&\leq
\frac25(-p_{51}+p_{55})
-\frac14(\widetilde Pz)_5+\frac7{20}(1+\varepsilon).
\end{align*}
Since $\widetilde Pz\in V$, the definition of $V$ gives
$$
(\widetilde Pz)_2-(\widetilde Pz)_5
=\frac85(\widetilde Pz)_1.
$$
Moreover, $\|z\|_\infty=1$, so
$|(\widetilde Pz)_1|\leq1+\varepsilon$.
Adding the preceding inequalities therefore yields
\begin{align*}
2
&\leq
\frac25(p_{21}+p_{22}-p_{51}+p_{55})
+\frac25(\widetilde Pz)_1
+\frac7{10}(1+\varepsilon)\\
&\leq
\frac25(p_{21}+p_{22}-p_{51}+p_{55})
+\frac{11}{10}(1+\varepsilon).
\end{align*}
Consequently,
$$
p_{21}+p_{22}-p_{51}+p_{55}
\geq\frac{9-11\varepsilon}{4}
\geq2(1-\varepsilon),
$$
where the last inequality follows from $\varepsilon\leq1/3$.
Hence at least one of $p_{21}+p_{22}$ and $-p_{51}+p_{55}$
is at least $1-\varepsilon$, which proves \eqref{pairalternative}.\\
If $|p_{21}|+|p_{22}|\geq1-\varepsilon$, take
$$
A_1=\{1,2\},\quad k_1=2,
\qquad
A_2=\{3,4,5\},\quad k_2=4,
$$
and set
$$
x:=-u+3v
=
\Big(-1,1,\frac{11}{4},\frac{13}{4},\frac{13}{5}\Big).
$$
If instead $|p_{51}|+|p_{55}|\geq1-\varepsilon$, we take
$$
A_1=\{1,5\},\quad k_1=5,
\qquad
A_2=\{2,3,4\},\quad k_2=3,
$$
and set
$$
x:=u+3v
=
\Big(1,\frac{13}{5},\frac{13}{4},\frac{11}{4},1\Big).
$$
In either case,
$$
\sum_{j\in A_1}|p_{k_1j}|\geq1-\varepsilon.
$$
Moreover, $A_1$ and $A_2$ form a partition of $[1:5]$, and
$$
x\in V,\qquad
x_{k_2}=\|x\|_\infty=\frac{13}{4},
\qquad
|x_j|=1\quad\text{for }j\in A_1.
$$
Since $\widetilde Px=x$, we have
\begin{align*}
\frac{13}{4}
=
\Big|\sum_{j=1}^5p_{k_2j}x_j\Big|
&\leq
\sum_{j\in A_1}|p_{k_2j}|
+
\frac{13}{4}\sum_{j\in A_2}|p_{k_2j}|
=
\sum_{j=1}^5|p_{k_2j}|
+
\frac94\sum_{j\in A_2}|p_{k_2j}|\\
&\leq
1+\varepsilon
+
\frac94\sum_{j\in A_2}|p_{k_2j}|.
\end{align*}
Therefore
$\sum_{j\in A_2}|p_{k_2j}|
\geq
1-\frac49\varepsilon
\geq
1-\varepsilon.$ It remains to choose the remaining sets. Let
$A_3=\{6\},$ $k_3=6,$
and, for $j\in[1:n-3]$, set
$A_{3+j}=\{2j+5,2j+6\},$ $
k_{3+j}=2j+5.$
Since $e_6\in W_n$ and $e_{2j+5}+e_{2j+6}\in W_n,$
and $P$ is a projection onto $W_n$, we have
$$
Pe_6=e_6\qquad \textrm{and} \qquad P(e_{2j+5}+e_{2j+6})
=
e_{2j+5}+e_{2j+6}.
$$
In particular,
$$
p_{66}=1\qquad \textrm{and} \qquad
p_{2j+5,\,2j+5}+p_{2j+5,\,2j+6}=1.
$$
Hence
$\sum_{l\in A_3}|p_{k_3l}|=1$
and, for every $j\in[1:n-3]$,
$$
\sum_{l\in A_{3+j}}|p_{k_{3+j}l}|
=
|p_{2j+5,\,2j+5}|+|p_{2j+5,\,2j+6}|
\geq1.
$$
Thus all the sets $A_1,\ldots,A_n$ satisfy the required estimate,
which completes the proof.\\
~\\
The preceding proposition is not vacuous, since these spaces admit
projections of norm strictly smaller than $4/3$.
\begin{prop}\label{smallprojectionWn}
For every $n\geq3$ there exists a projection
$P_n:\ell_\infty^{2n}\to W_n$ such that
$\|P_n\|=\tfrac{13}{10}<\tfrac43.$
\end{prop}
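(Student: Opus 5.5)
The plan is to build $P_n$ as a block-diagonal operator that respects the decomposition
$$
W_n=V\oplus\mathrm{span}\{e_6\}\oplus\bigoplus_{j=1}^{n-3}\mathrm{span}\{e_{2j+5}+e_{2j+6}\}.
$$
On the first five coordinates $P_n$ acts as a projection $P_V:\ell_\infty^5\to V$. On coordinate $6$ it is the identity. On each repeated pair $\{2j+5,2j+6\}$ it is the averaging map $(y,y')\mapsto\bigl(\tfrac{y+y'}2,\tfrac{y+y'}2\bigr)$. This operator is a projection onto $W_n$. Its $\ell_\infty$-operator norm is the maximal row $\ell_1$-sum, i.e.\ the maximum of the norms of the blocks. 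The last two kinds of block have norm $1$, so $\|P_n\|=\|P_V\|$. The whole statement therefore reduces to producing a single projection $P_V:\ell_\infty^5\to V$ with $\|P_V\|=\tfrac{13}{10}$, independently of $n$.

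To find $P_V$ I write it as $P_Vy=\varphi(y)u+\psi(y)v$ with $\varphi,\psi\in\ell_1^5$ subject to the biorthogonality conditions $\varphi(u)=1$, $\varphi(v)=0$, $\psi(u)=0$, $\psi(v)=1$. The $i$-th row of $P_V$ is $u_i\varphi+v_i\psi$, so
$$
\|P_V\|=\max_{i\in[1:5]}\|u_i\varphi+v_i\psi\|_1 .
$$
To cut down the parameters I use the symmetry $\sigma$ of $V$ which swaps coordinates $2\leftrightarrow5$ and $3\leftrightarrow4$; it sends $u\mapsto-u$ and $v\mapsto v$. Averaging any projection over $\sigma$ does not increase its norm, so I may take $\varphi$ $\sigma$-antisymmetric and $\psi$ $\sigma$-symmetric:
$$
\varphi=(\alpha,\beta,\gamma,-\gamma,-\beta),\qquad \psi=(0,t,h,h,t).
$$
With this form, $\varphi(v)=0$ and $\psi(u)=0$ hold automatically. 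The remaining constraints are the two linear equations $\alpha+\tfrac85\beta+\tfrac12\gamma=1$ and $\tfrac65t+2h=1$. Rows $4,5$ mirror rows $3,2$, so only rows $1,2,3$ have to be controlled.

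The main obstacle is choosing the parameters so that rows $1,2,3$ all have $\ell_1$-norm at most $\tfrac{13}{10}$, with equality in at least one row. This is a small linear program in the four unknowns $\alpha,\beta,\gamma,t$ (with $h$ then determined by the second constraint). A naive choice such as $\varphi=e_1$, $\psi=\tfrac12(e_3+e_4)$ gives row norms $1,\tfrac75,\tfrac54$, so row $2$ is too large. The correction must therefore use nonzero $\beta,\gamma$ to lower row $2$ at the expense of rows $1$ and $3$. I would first try to equalize the three row norms at $\tfrac{13}{10}$, fixing the sign pattern of the entries of each row and solving the resulting linear system. The values $\tfrac7{20}$ and $\tfrac13$ appearing in the proof of Proposition~\ref{combiningWn} suggest which coordinates should balance.

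Once explicit rational values are found, I verify the five row sums directly. Equality $\|P_V\|=\tfrac{13}{10}$ then follows from some row attaining $\tfrac{13}{10}$, so no lower-bound argument is needed. Combined with the block structure above, this gives $\|P_n\|=\tfrac{13}{10}<\tfrac43$ for every $n\geq3$.
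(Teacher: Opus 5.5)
\textbf{There is a genuine gap: the proposal stops before the step that carries the content of the statement.} Your reduction matches the paper's. $P_n$ is a projection $Q$ onto $V$ on the first five coordinates, the identity on coordinate $6$, and averaging on each repeated pair, so $\|P_n\|=\max\{\|Q\|,1\}$. Writing $Q=\varphi(\cdot)u+\psi(\cdot)v$ gives row norms $\|u_i\varphi+v_i\psi\|_1$. The paper's functionals $f=(\tfrac23,\tfrac16,\tfrac2{15},-\tfrac2{15},-\tfrac16)$ and $g=(0,\tfrac16,\tfrac25,\tfrac25,\tfrac16)$ even lie in your ansatz.

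The proposition is a concrete numerical existence claim. What needs proving is an explicit biorthogonal pair $(\varphi,\psi)$ with all row norms at most $\tfrac{13}{10}$, or some argument that your linear program has value at most $\tfrac{13}{10}$. You supply neither. The only value you compute is $\tfrac75$. ``Fix a sign pattern and equalize the rows'' is a search procedure. It gives no guarantee that the resulting system is solvable with the assumed signs, or that the optimum lies below $\tfrac{13}{10}$ rather than above it. As written, the statement is not proved.

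To finish, you must write down numbers and check the five row sums. With the paper's values $\alpha=\tfrac23$, $\beta=\tfrac16$, $\gamma=\tfrac2{15}$, $t=\tfrac16$, $h=\tfrac25$, the row norms are $\tfrac{19}{15},\tfrac{32}{25},\tfrac{13}{10},\tfrac{13}{10},\tfrac{32}{25}$, so the paper does not equalize. Your equalization idea does also work. The pair $\varphi=\tfrac1{580}(104,285,40,-40,-285)$, $\psi=\tfrac1{580}(0,185,179,179,185)$ satisfies your two linear constraints. It gives all five row sums equal to $\tfrac{754}{580}=\tfrac{13}{10}$.

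A smaller error: the symmetry is misstated. The permutation swapping $2\leftrightarrow5$ and $3\leftrightarrow4$ sends $u$ to $(1,-\tfrac45,-\tfrac14,\tfrac14,\tfrac45)$. This is not $-u$, and it is not even in $V$: its first coordinate forces the $u$-coefficient to be $1$, and coordinates $2$ and $3$ then demand different $v$-coefficients ($-\tfrac83$ versus $-\tfrac12$). The correct map also flips the first coordinate, $\tau(x)=(-x_1,x_5,x_4,x_3,x_2)$. It is a signed permutation, hence an isometry of $\ell_\infty^5$, with $\tau u=-u$ and $\tau v=v$. Your forms of $\varphi$ and $\psi$ are exactly the $\tau$-antisymmetric and $\tau$-symmetric functionals. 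Since the ansatz only guides the search, this slip does not affect a proof in which explicit $\varphi,\psi$ are checked directly.
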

{\sc Proof.}
Define
$
f:=(
\tfrac23,\tfrac16,\tfrac2{15},-\tfrac2{15},-\tfrac16
),$
and $g:=(0,\frac16,\frac25,\frac25,\frac16).$
One can easily check, that
$f(u)=1,$  $f(v)=0,$ 
$g(u)=0,$ and $g(v)=1.$
Hence the operator
$Q(\cdot)=f(\cdot)u+g(\cdot)v$
is a projection from $\ell_\infty^5$ onto $V.$  The $i$-th row of $Q$
is $u_i f+v_i g$, and the sums of the absolute values of the entries
in its five rows are, respectively,
$\tfrac{190}{150}$, $\tfrac{192}{150}$, $\tfrac{195}{150}$,
$\tfrac{195}{150}$, and $\tfrac{192}{150}$. Hence
$\|Q\|=\tfrac{195}{150}=\tfrac{13}{10}.$ For $y\in\ell_\infty^5$ and
$s,a_1,b_1,\ldots,a_{n-3},b_{n-3}\in\mathbb R$, define
\begin{align*}
P_n(y,s,a_1,b_1,\ldots,a_{n-3},b_{n-3})
&:=
\Big(
Qy,s,
\tfrac{a_1+b_1}{2},\tfrac{a_1+b_1}{2},
\ldots,
\tfrac{a_{n-3}+b_{n-3}}{2},
\tfrac{a_{n-3}+b_{n-3}}{2}
\Big).
\end{align*}
Since $Q$ is a projection onto $V$, it follows directly that $P_n$
is a projection onto $W_n$. Moreover, since
$|(a_j+b_j)/2|\leq\max\{|a_j|,|b_j|\}$ and the restriction of $P_n$
to the first five coordinates is $Q$, we have
$$
\|P_n\|=\max\{\|Q\|,1\}=\frac{13}{10}<\frac43.
$$

~\\
The spaces $W_n$ show that the geometric property used above
and the column-combining condition are genuinely different.
The former fails for these spaces, whereas \eqref{colcomb} holds
for every projection onto $W_n$ of norm at most $4/3$.

So far, we have not found a counterexample to \eqref{colcomb}
among projections of norm less than $4/3$, in any codimension.
It is therefore natural to ask whether this condition holds
for every such projection onto a subspace of $\ell_\infty^N$.
By the argument of Theorem~\ref{Bazur}, this would yield the
corresponding Banach--Mazur estimate without any restriction
on the codimension. Proposition~\ref{counterhigher} shows,
however, that a general proof would require more than a direct
extension of the geometric argument used here.

\section*{Use of AI tools}

The authors used ChatGPT (OpenAI) to assist with literature searches, the exploration
of possible proof strategies, and the revision of the exposition.
All formal statements, arguments, and proofs in the manuscript were written and checked by the authors, who take full responsibility for the accuracy and integrity of the article.

\end{document}